\definecolor{aleacolor}{rgb}{0.16,0.59,0.78}
\theoremstyle{plain}
\newtheorem{theorem}{Theorem}[section]
\newtheorem{proposition}[theorem]{Proposition}
\newtheorem{lemma}[theorem]{Lemma}
\theoremstyle{definition}
\theoremstyle{remark}
\newtheorem{remark}[theorem]{Remark}
\makeatletter \@addtoreset{equation}{section} \makeatother
\renewcommand{\cite}{\citet}
\newcommand{\p}{\mathbb{P}}
\newcommand{\e}{\mathbb{E}}
\newcommand{\ud}{\mathrm{d}}
\newcommand{\R}{\mathbb{R}}
\renewcommand{\P}{\mathbb{P}}
\newcommand{\N}{\mathbb{N}}
\newcommand{\E}{\mathbb{E}}
\newcommand{\Indi}[1]{\mathbf{1}_{\{#1\}}}
\newcommand{\Ind}{\mathbf{1}}
\newcommand{\Expx}[2]{\mathbb{E}_{#1}\left[ #2 \right]}
\newcommand{\Prox}[2]{\mathbb{P}_{#1}\left( #2 \right)}
\newcommand{\Expconx}[3]{\mathbb{E}_{#1}\left[#2\big| #3 \right]}
\newcommand{\bra}[2]{\langle #1,#2\rangle} 
\newcommand{\mbf}[1]{\boldsymbol{#1}} 
\begin{document}

\title[Almost sure growth of supercritical multi-type continuous-state branching process]
{Almost sure growth of supercritical multi-type continuous-state branching process}
\thanks{}
\author{Andreas E. Kyprianou}
\address{Department of Mathematical Sciences, University of Bath, Claverton Down, Bath, BA2 7AY, UK.}
\email{a.kyprianou@bath.ac.uk}

\author{Sandra Palau}
\address{Department of Mathematical Sciences, University of Bath, Claverton Down, Bath, BA2 7AY, UK.}
\email{sp2236@bath.ac.uk}

\author{Yan-Xia Ren}
\address{LMAM School of Mathematical Sciences \& Center for Statistical Science, Peking University, Beijing, 100871, P.R. China}
\email{yxren@math.pku.edu.cn}

\subjclass[2000]{60J68, 60J80.}
\keywords{Continuous-state branching processes, non-local branching mechanism, super Markov chain, strong law of large numbers.}

		\begin{abstract}
		In \cite[Example 2.2]{Li}, the notion of a  multi-type continuous-state branching process  (MCSBP) was introduced with a finite number of types, with the countably infinite case being proposed in \cite{kp}. One may consider such processes as a super-Markov chain on a countable state-space of types, which undertakes both local and non-local branching. In \cite{kp} it was shown that, for  MCSBPs, under mild conditions, there exists a lead eigenvalue which characterises the spectral radius of the  linear semigroup associated to the process. Moreover, in a qualitative sense, the sign of this eigenvalue distinguishes between the cases where there is local extinction and exponential growth. In this paper, we continue in this vein and show that, when the number of types is finite, the lead eigenvalue gives the precise almost sure rate of growth of each type. This result matches perfectly classical analogues for multi-type Galton--Watson processes.	
\end{abstract}

		\maketitle

\section{Introduction}

Let $d\in \N$ be a natural number and put $E=\{1,\cdots,d\}$.
For $\mbf{x}=(x_1,\cdots,x_d)^T\in\R^d$ we use the notation $x(i):=x_i$ for $i\in E$ and denote by $\|\mbf{x}\|$ the Euclidean norm. We consider a multi-type continuous state branching process  with $d$ types with branching mechanism $\psi$, henceforth referred to as a $\psi$-MCSBP. This is to say, we are interested in  a $[0,\infty)^{d}$-valued strong Markov process $\mbf{X}:=(\mbf{X}_t,t\geq 0)$,  with probabilities $\{\p_{\mbf{x}}, \mbf{x}\in \R^{d}_{+}\}$ such that the following branching property hold:
for all $\mbf{x},\mbf{y}\in\R_+^d$,
	$$\Expx{\mbf{x}+\mbf{y}}{ {\rm e}^ {-\langle\mbf{f}, \mbf{X}_t\rangle}}=\Expx{\mbf{x}}{ {\rm e}^ {-\langle\mbf{f}, \mbf{X}_t\rangle}}\Expx{\mbf{y}}{ {\rm e}^ {-\langle\mbf{f}, \mbf{X}_t\rangle}},\qquad t\geq 0, \quad\mbf{f}\in \R_+^d.$$
Its branching mechanism, a vectorial function $\psi:E\times \R_+^d\rightarrow 	\R^d$ such that
	\begin{equation*}\label{branching mechanism}
\psi_i(\mbf{u}):=-\langle \mbf{u},\mbf{\widetilde{B}e}_i\rangle+c_i u_i^2+\int_{\R^d_+}( {\rm e}^ {-\langle\mbf{u},\mbf{z}\rangle}-1+u_i(z_i\wedge 1))\mu_i(\ud \mbf{z}),\qquad \mbf{u}\in\R_+^d, i\in E,	
\end{equation*}
	where $c_i\in \R_+$, $\mbf{\widetilde{B}}$ is a $d\times d$ matrix such that $\widetilde{B}_{i,j}\Indi{i\neq j}\in\R_+$, $\mbf{e_1},\cdots,\mbf{e_d}$ is the natural basis in $\R^d$, and $\mu_i$ is a measure concentrated on $\R_+^d\setminus \{\mbf0\}$ such that
$$\int_{\R_+^d}\left[\|\mbf{z}\|\wedge \|\mbf{z}\|^2+\underset{j\in E}{\sum}\Indi{j\neq i}z_j\right]\mu_i(\ud \mbf{z})<\infty.$$

The process $\mbf{X}$ is characterized by its Laplace transform:
	\begin{equation}
	\label{laplace}
\Expx{\mbf{x}}{ {\rm e}^ {-\bra{\mbf{f}}{\mbf{X}_t}}}= {\rm e}^ {-\bra{\mbf{x}}{\mbf{v}(t,\mbf{f})}}, \qquad \mbf{x},\mbf{f}\in\R_+^d,\ t\in\R_+,
	\end{equation}
		where, for any $\mbf{f}\in \R_+^d$, the continuous differentiable function
$$t\mapsto \mbf{v}(t,\mbf{f})=(v_1(t,\mbf{f}),\cdots,v_d(t,\mbf{f}))^T,$$ is the unique locally bounded non-negative solution to the system of integral equations
		\begin{equation}
		\label{differential}
		v_i(t,\mbf{f})=f_i-\int_0^t\psi_i(\mbf{v}(s,\mbf{f}))\ud s, \qquad  \qquad i\in E.
		\end{equation}
	According to \cite{blp1}, this process can be seen as a strong solution of a stochastic differential equation (SDE). More precisely, let  $\mbf{W}_t$ be a $d$-dimensional standard Brownian motion, and for each $i\in E$, let $N_i$ be a Poisson random measure on $\R_+\times\R_+^d\times \R_+$ with intensity measure $\ud s\mu_i(\ud \mbf{z})\ud r$, and denote by $\widetilde{N}_i$ its compensated measure. Suppose that $\mbf{W}$ and $(N_i)_{i\in E}$ are independent of each other. Then, a MCSBP with branching mechanism $\psi$ is characterized as the unique $\R_+^d$-valued strong solution to the SDE

\begin{equation}\label{sde}\begin{split}
\mbf{X}_t=&\mbf{X}_0+\int_0^t \mbf{BX}_s\ud s+\underset{i\in E}{\sum} \mbf{e}_i\int_0^t\sqrt{2c_iX_{s,i}}\ud W_{s,i}+\underset{i\in E}{\sum}\int_0^t\int_{\R_+^d}\int_0^{\infty} \mbf{z}\Indi{r\leq X_{s-,i}}\widetilde{N}_i(\ud s,\ud \mbf{z}, \ud r),
\end{split}
\end{equation}
where the matrix $\mbf{B}$ is given by
$$B_{i,j}=\widetilde{B}_{i,j}+\int_{\R_+^d}(z_i-\delta_{i,j})^+\mu_j(\ud \mbf{z}).$$
Moreover, they proved in \cite[Formula (2.15) and the later computations]{blp1} that $\psi$ can be written as
\begin{equation}
\label{def psi}
\psi_i(\mbf{u})=-\langle \mbf{u},\mbf{{B}e}_i\rangle+c_i u_i^2+\int_{\R^d}( {\rm e}^ {-\langle\mbf{u},\mbf{z}\rangle}-1+\bra{\mbf{u}}{\mbf{z}})\mu_i(\ud \mbf{z}),\qquad \mbf{u}\in\R_+^d,\ i\in E.
\end{equation}
\begin{remark}\rm
Thanks to the  relation between $\mbf{B}$ and $\widetilde{\mbf{B}}$, when we write the branching me\-cha\-nism as in \eqref{def psi}, the matrix $\mbf{B}$ satisfies
\begin{equation}
\label{inequality B}
\int_{\R^d_+}{z}(i) {\mu_{j}(\ud \mbf{z})}\leq B_{ij} \qquad \mbox{ for all } i \neq j.
\end{equation}
\end{remark}

\begin{remark}\rm
If we regard $E$ as the space where particles located, the model we described above can be seen as a special case of a superprocess in which the associated Markov movement is that of a Markov chain on $E$. Indeed, for such a process, from e.g. \cite{dyn91},  the log-Laplace semigroup, $\mbf{V}_t
:\mathbb{R}_+^d\mapsto\mathbb{R}_+^d
$, which similarly to \eqref{laplace}, satisfies $\langle\mbf{x}, \mbf{V}_t\mbf{f}\rangle = -\log \Expx{\mbf{x}}{ {\rm e}^ {-\bra{\mbf{f}}{\mbf{X}_t}}}$, for $\mbf{f}, \mbf{x}\in\mathbb{R}^d_+$, and is the unique solution
to
\begin{equation*}
\mbf{V}_t\mbf{f}(i)=[ {\rm e}^ {t\mbf{Q}}\mbf{f}](i)-\int_0^t {\rm e}^ {(t-s)\mbf{Q}}\psi_{i}(V_{s}\mbf{f})\ud s
\end{equation*}
where
$\mbf{f} = (f(1),\cdots, f(d))^T\in\R_+^d$
and $\mbf{Q}$ is the infinitesimal generator of the associated Markov chain. Note that a straightforward manipulation in the spirit of Theorem 3.1.2 of  \cite{dynkin}
implies that
\begin{equation*}
\mbf{V}_t\mbf{f}(i)=f(i)+\int_0^t[\mbf{Q}V_{s}\mbf{f}](i)-\int_0^t\psi_{i}(V_{s}\mbf{f})\ud s=f(i)+\int_0^t\langle V_{s}\mbf{f},\mbf{Q}^T\mbf{e}_i\rangle-\int_0^t\psi_{i}(V_{s}\mbf{f})\ud s,\end{equation*}
where $\mbf{Q}^T$ is the transpose matrix of $\mbf{Q}$.
In turn, we note  that this is equivalent to the unique semigroup evolution that solves \eqref{differential}, albeit that the branching mechanism
\begin{equation*}
\widetilde{\psi}_i(\mbf{u}):=-\langle \mbf{u},\mbf{Q}^T\mbf{e}_i\rangle+\psi_i(\mbf{u})\qquad \mbf{u}\in\R_+^d,\ i\in E.
	\end{equation*}
\end{remark}

We therefore suppose in the remaining part  of this paper that $\{\mbf{X}_t, t\ge 0\}$ is  a ${\psi}$-MCSBP.
Denote by $\mbf{M}(t):=(M(t)_{i,j})_{d\times d}$ the matrix with elements
$$M(t)_{i,j}:=\Expx{\mbf{e}_i}{\bra{\mbf{e}_j}{\mbf{X}_t}},\qquad i,j\in E,\ t\geq 0.$$
 By \cite[Lemma 3.4]{blp1} we have
\begin{equation}
\mbf{M}(t)= {\rm e}^ {t\mbf{B}^T},\qquad t\geq 0,
\label{eB}
\end{equation}
where $\mbf{B}^T$ is the transpose of $\mbf{B}$. Observe that for any initial vector
 $\mbf{x}_0$ and any  $\mbf{x}\in \R^d_+$,
 \begin{equation*}
 \label{form}
 \Expx{\mbf{x}_0}{\bra{\mbf{x}}{\mbf{X}_t}}=\mbf{x}_0^T\mbf{M}(t)\mbf{x}, \qquad t\geq 0.
 \end{equation*}
Moreover, by \eqref{sde} and the It\^{o} calculus, for all $\mbf{x}\in\R^d$ and  $s\leq r\leq t$, we obtain
\begin{equation}\label{completly useful equation}\begin{split}
\bra{\mbf{M}(t-r)\mbf{x}}{\mbf{X}_r}
=&\bra{\mbf{M}(t-s)\mbf{x}}{\mbf{X}_s}+\underset{i\in E}{\sum} \int_s^r[\mbf{M}(t-u)\mbf{x}]_i\sqrt{2c_iX_{u,i}}\ud W_{u,i}\\
&\hspace{1.5cm}+\underset{i\in E}{\sum}\int_s^r\int_{\R_+^d}\int_0^{\infty} \bra{\mbf{M}(t-u)\mbf{x}}{\mbf{z}}\Indi{l\leq X_{u-,i}}\widetilde{N}_i(\ud u,\ud \mbf{z}, \ud l).
\end{split}
\end{equation}
In particular, if we take $r=t$, we get the following equation
\begin{equation}\label{useful equation}\begin{split}
\bra{\mbf{x}}{\mbf{X}_t}
=&\bra{\mbf{M}(t-s)\mbf{x}}{\mbf{X}_s}+\underset{i\in E}{\sum} \int_s^t[\mbf{M}(t-u)\mbf{x}]_i\sqrt{2c_iX_{u,i}}\ud W_{u,i}\\
&\hspace{1.5cm}+\underset{i\in E}{\sum}\int_s^t\int_{\R_+^d}\int_0^{\infty} \bra{\mbf{M}(t-u)\mbf{x}}{\mbf{z}}\Indi{l\leq X_{u-,i}}\widetilde{N}_i(\ud u,\ud \mbf{z}, \ud l).
\end{split}
\end{equation}

A vector $\mbf{u}\in\R^d$ is called a $\lambda$-right (resp. left) eigenvector if for all $t\geq 0$,
$$\mbf{M}(t)\mbf{u}= {\rm e}^ {\lambda t}\mbf{u}, \qquad (\mbox{resp. }\mbf{u}^T\mbf{M}(t)= {\rm e}^ {\lambda t}\mbf{u}^T).$$
If $\mbf{u}$ is a $\lambda$-right eigenvector, define
$$W_t^{\lambda}(\mbf{u}):= {\rm e}^ {-\lambda t} \bra{\mbf{u}}{\mbf{X}_t}, \qquad t\geq 0.$$
Then it was shown in Proposition 3 of \cite{kp} that if $\mbf{u}$ is a $\lambda$-right eigenvector with $\lambda\in \R$, then for any $
\mbf{x}\in\R^d_+$,  $\{W^\lambda_t(\mbf{u}), t\ge 0\}$ is a martingale under $\p_{\mbf{x}}$.

Suppose that $\mbf{M}(t)$ is irreducible (exists $t_0>0$ such that $M(t_0)_{i,j}>0$ for all $i,j\in E $). The Perron-Frobenius theory implies that there exist $\lambda_1\in\R$ and right and left  associated eigenvectors $\mbf{\phi}, \mbf{\widehat{\phi}}\in\R_+^d$ with all coordinates strictly positive
such that $\mbf{M}(t)\mbf\phi = {\rm e}^{\lambda_1 t}\mbf\phi$ and $\widehat{\mbf\phi}^{T}\mbf{M}(t)  = {\rm e}^{\lambda_1 t}\widehat{\mbf\phi}^T$, for all $t\geq 0$. We note from \eqref{eB} that this is equivalent to the statement that $\mbf\phi$ and $\widehat{\mbf\phi}$ are right and left eigenvectors for $\mbf{B}^{T}$ with common eigenvalue $\lambda_1$.
For convenience we shall normalise $\mbf\phi$ and $\widehat{\mbf\phi}$ such that  $\bra{\mbf{\phi}}{\mbf{1}}=1=\bra{\mbf{\phi}}{\mbf{\widehat{\phi}}}$. Moreover, any other eigenvalue $\lambda$ satisfies $\lambda_1>\Re(\lambda)$ and
\begin{equation*}
\underset{t\rightarrow\infty}{\lim}\mbf{M}(t) {\rm e}^ {-\lambda_1 t}=\mbf{P}:=(\mbf{\phi}_i\mbf{\widehat{\phi}}_j)_{i,j\in E\times E}.
\end{equation*}
In addition, \cite[Lemma A.3]{bp2005} proved that there exist $c_1,c_2,c_3>0$ such that
\begin{equation}
\label{limit M barczy}
\|\mbf{M}(t) {\rm e}^ {-\lambda_1 t}-\mbf{P}\|\leq c_1 {\rm e}^ {-c_2t} \qquad \mbox{ and }\qquad \|\mbf{M}(t)\|\leq c_3 {\rm e}^ {\lambda_1 t} \qquad \mbox{ for all } t\in \R_+.
\end{equation}
In order to simplify notation, we will denote by
\begin{equation}
\label{previousmg}W_t:=W_t^{\lambda_1}(\mbf\phi)= {\rm e}^ {-\lambda_1 t} \bra{\mbf{\phi}}{\mbf{X}_t}, \qquad t\geq 0.
\end{equation} Observe that $W_t$ is a non-negative martingale and has a limit a.s. that we will denote by $W_{\infty}$.

{We say that the $\psi$-MCSBP is {\it subcritical}, {\it critical}, or {\it supercritical} according as $\lambda_1<0$, $\lambda_1=0$, or $\lambda_1>0$. This classification is consistent with the corresponding classification for single-type continuous state branching processes, see, e.g., \cite[Page 58]{Li}.  With the use of \eqref{previousmg}, it was proved  in \cite{kp}, that in the subcritical and critical cases the process has extinction a.s. In this paper, we want to find the asymptotic behaviour in the supercritical case.}

The following Theorem \ref{xlogx theorem} gives
a relationship between the $\mathbb{L}^1$-convergence of the martingale $\{W_t, t\ge 0\}$ and the following condition:
\begin{equation}\tag{$x\log x$ condition} \label{xlogx}
\underset{i\in E}{\sum}\int_{1\leq \bra{\mbf{1}}{\mbf{z}}<\infty}\bra{\mbf{1}}{\mbf{z}}\ln(\bra{\mbf{1}}{\mbf{z}})\mu_i(\ud \mbf{z})<\infty.
\end{equation}
If $\mu_i(\ud \mbf{z})=\mbf{\pi}\Pi_i(\ud r)$, where $\mbf{z} = r\mbf{\pi}$ with $\Pi_i$ being a measure on $(0,\infty)$ and $\mbf{\pi}$ a fixed probability mass function on the type space $E$ in vector form, the following theorem  comes from Theorem 5.1 and Theorem 6.2 in \cite{rensongyang}. See also \cite[Theorem 6]{kp}.

\begin{theorem}\label{xlogx theorem}
Suppose that $\lambda_1>0$. The following assertions hold:
\begin{enumerate}
	\item If \eqref{xlogx} holds,
then for any $\p_{\mbf{x}}$,  $W_{\infty}$ is the $\mathbb{L}^1(\p_{\mbf{x}})$ limit of $W_t$ as $t\to\infty$.
	\item If \eqref{xlogx} doesn't hold,
then for any $\mbf{x}\in\R^d_{+}$, $W_\infty=0$, $\P_{\mbf{x}}$ a.s.
\end{enumerate}
\end{theorem}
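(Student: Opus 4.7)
The strategy is the spine (immortal particle) method of Lyons, Pemantle and Peres, adapted to the multi-type CSBP setting; this is precisely what Ren, Song and Yang do in the factorised case $\mu_i(\ud\mbf{z})=\mbf{\pi}\Pi_i(\ud r)$ quoted above, and I would extend their argument to a general non-local jump measure. Since $W_t$ is a strictly positive $\mathbb{P}_{\mbf{x}}$-martingale of mean $\langle\mbf{x},\mbf{\phi}\rangle$, the first step is to use it to define the change of measure
\[
\frac{\ud\widetilde{\mathbb{P}}_{\mbf{x}}}{\ud\mathbb{P}_{\mbf{x}}}\bigg|_{\mathcal{F}_t}=\frac{W_t}{\langle\mbf{x},\mbf{\phi}\rangle}.
\]

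Next I would derive a pathwise spine decomposition of $\mbf{X}$ under $\widetilde{\mathbb{P}}_{\mbf{x}}$ by applying Girsanov's theorem to the Brownian and compensated Poisson parts of the SDE \eqref{sde}. The outcome I expect is that, under $\widetilde{\mathbb{P}}_{\mbf{x}}$, $\mbf{X}$ is equal in law to $\mbf{X}^{\circ}+\mbf{X}^{I}$, where $\mbf{X}^{\circ}$ is an independent copy of the original $\psi$-MCSBP and $\mbf{X}^{I}$ is an immigration process driven by a spine $\xi$. Here $\xi$ is an $E$-valued Markov chain, namely the Doob $\mbf{\phi}$-transform of the chain with generator $\mbf{B}^{T}$, with starting law $i\mapsto x(i)\phi(i)/\langle\mbf{x},\mbf{\phi}\rangle$ and ergodic stationary distribution $i\mapsto\phi(i)\widehat{\phi}(i)$. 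Along $\xi$, two kinds of immigration occur: a continuous Brownian immigration tied to the quadratic coefficients $c_{\xi_t}$, and jump immigration whose points $(t,\mbf{z}_t)$ form, conditionally on $\xi$, a Poisson point process with intensity proportional to $\langle\mbf{\phi},\mbf{z}\rangle\,\mu_{\xi_t}(\ud\mbf{z})\ud t$; each immigrated cloud $\mbf{z}$ then evolves as an independent $\psi$-MCSBP started from $\mbf{z}$.

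With this in hand I would invoke the standard Radon--Nikodym dichotomy: $W_t\to W_{\infty}$ in $\mathbb{L}^{1}(\mathbb{P}_{\mbf{x}})$ iff $\widetilde{\mathbb{P}}_{\mbf{x}}(W_{\infty}<\infty)=1$, while $W_{\infty}=0$, $\mathbb{P}_{\mbf{x}}$-a.s., iff $\widetilde{\mathbb{P}}_{\mbf{x}}(W_{\infty}=\infty)=1$. Because $\mbf{X}^{\circ}$ contributes an $\mathbb{L}^{1}(\mathbb{P})$-bounded martingale to $W_t$, and each immigrated subprocess contributes a unit-mean $\mathbb{P}$-martingale independent of the past, the spine decomposition reduces finiteness of $W_{\infty}$ under $\widetilde{\mathbb{P}}_{\mbf{x}}$ to convergence of the series $\sum_{t}e^{-\lambda_{1}t}\langle\mbf{\phi},\mbf{z}_t\rangle$. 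Using ergodicity of $\xi$ and applying the Borel--Cantelli lemma to the events $\{e^{-\lambda_{1}t}\langle\mbf{\phi},\mbf{z}_t\rangle>1\}$, the latter is equivalent to
\[
\sum_{i\in E}\int_{\R_+^d}\log^{+}\langle\mbf{\phi},\mbf{z}\rangle\,\langle\mbf{\phi},\mbf{z}\rangle\,\mu_i(\ud\mbf{z})<\infty,
\]
and since $\mbf{\phi}$ has positive coordinates with $\langle\mbf{\phi},\mbf{z}\rangle\asymp\langle\mbf{1},\mbf{z}\rangle$, this is equivalent to \eqref{xlogx}, proving both parts of the theorem.

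The main obstacle is making the spine decomposition in the second paragraph rigorous without the product assumption $\mu_i(\ud\mbf{z})=\mbf{\pi}\Pi_i(\ud r)$. In the product case a single jump merely rescales the common direction $\mbf{\pi}$, so one is effectively dealing with a family of scalar $\log$-moments $\Pi_i(\ud r)$; in the general case a single jump simultaneously contributes mass to all coordinates, and the Girsanov factor $\langle\mbf{\phi},\mbf{z}\rangle/\langle\mbf{\phi},\mbf{X}_{s-}\rangle$ must be absorbed cleanly into a new Poisson intensity while preserving the interpretation of $\xi$ as a spine independent of $\mbf{X}^{\circ}$. Once this is done, the appearance of $\langle\mbf{1},\mbf{z}\rangle$ rather than a single-coordinate tail moment in \eqref{xlogx} is forced.
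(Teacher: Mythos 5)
Your overall plan — change of measure with $W$, spine decomposition, reduce $\mathbb{L}^1$-convergence versus degeneracy to finiteness/infiniteness of a discounted immigrated mass, then split into small and big jumps for Borel--Cantelli — is exactly the route the paper takes, so the strategy is right. However, the central technical object, the spine decomposition for a genuinely non-factorised $\mu_i$, is stated incorrectly, and this is precisely the place you yourself flag as ``the main obstacle.''

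You describe the spine $\xi$ as the $\mbf{\phi}$-Doob transform of the chain with generator $\mbf{B}^T$ (correct), and then claim that \emph{conditionally on} $\xi$ the immigration points form a Poisson point process with intensity proportional to $\langle\mbf{\phi},\mbf{z}\rangle\,\mu_{\xi_t}(\ud\mbf{z})\,\ud t$. This cannot be right. The generator of $\xi$ is
\[
L_{ij}=\frac{\phi(j)}{\phi(i)}\left(B^T_{ij}-\lambda_1\delta_{ij}\right),
\]
and for $i\neq j$ the rate $B^T_{ij}$ contains the term $\int z(j)\,\mu_i(\ud\mbf{z})$ coming from the non-local part of the branching mechanism. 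Thus the jumps of $\xi$ from $i$ to $j$ are not autonomous: a positive fraction of them are \emph{caused by} (and hence must be coupled with) the large non-local branching events, and at each such jump time an extra cloud $\mbf{z}$ immigrates with a law that is tilted by $z(j)$, not by $\langle\mbf{\phi},\mbf{z}\rangle$. The size-biased intensity $\langle\mbf{\phi},\mbf{z}\rangle\mu_i(\ud\mbf{z})=\sum_j\phi(j)z(j)\mu_i(\ud\mbf{z})$ therefore must be split: the $j=\eta_s$ term gives a continuous-time Poissonian immigration with intensity $z(\eta_s)\mu_{\eta_s}(\ud\mbf{z})\,\ud s$, while the $j\neq\eta_s$ terms are absorbed into the spine's own jump rates together with a one-shot ``jump immigration'' with law $\nu_{\eta_{s-},\eta_s}$, supported at the transition times of $\eta$. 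The paper's Theorem~\ref{spinesgpn} and Theorem~\ref{gamma} make exactly this three-part structure explicit; it is not a cosmetic refinement but the content of the step you defer. In the factorised case $\mu_i=\mbf{\pi}\Pi_i$ the direction $\mbf{\pi}$ is fixed and the coupling degenerates, which is why the scalar-looking picture you invoke from the $\mu_i=\mbf{\pi}\Pi_i$ setting does not transfer.

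Once the decomposition is in the correct form, your final paragraph also needs to be completed: for part (2), failure of \eqref{xlogx} can be caused either by $\int z(i)\ln\langle\mbf{1},\mbf{z}\rangle\,\mu_i(\ud\mbf{z})=\infty$ (in which case the blowup comes from the continuous-time discontinuous immigration while $\eta$ sits at $i$) or by $\int z(j)\ln\langle\mbf{1},\mbf{z}\rangle\,\mu_i(\ud\mbf{z})=\infty$ for some $j\neq i$ (in which case the blowup comes from the immigration at the $i\to j$ jump times of $\eta$, and one needs a renewal-theoretic bound on those jump times). Your single Borel--Cantelli application over a Poisson point process with intensity $\propto\langle\mbf{\phi},\mbf{z}\rangle\mu_{\xi_t}$ does not see this dichotomy because that PPP does not exist in the general non-local case.
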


We will prove this theorem in Section \ref{spine and martingale}. By using this theorem we obtain a strong law of large numbers for MCSBPs. This result matches perfectly classical analogues for multi-type Galton--Watson processes; see for example Theorem V.6.1 of \cite{an}.

\begin{theorem}\label{tstrong1}
Suppose that $\mbf{X}$ is a MCSBP with principal eigenvalue $\lambda_1>0$  and right and left  associated eigenvectors $\mbf{{\phi}},\mbf{\widehat{\phi}}\in\R_+^d$.
Then for any $\mbf{x}\in\R^d_+$,
\begin{equation*}\label{strong1}
\underset{t\rightarrow \infty}{\lim}  {\rm e}^ {-\lambda_1 t} \mbf{X}_t=W_{\infty}\mbf{\widehat{\phi}},\quad \P_{\mbf{x}}\mbox{ a.s.}
\end{equation*}
\end{theorem}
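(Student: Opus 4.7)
The plan is to decompose each coordinate of $\mbf{X}_t$ along $\mbf{\phi}$ and along a complementary direction. Set $\mbf{v}_i := \mbf{e}_i - \widehat{\phi}_i\mbf{\phi}$; the normalisation $\bra{\mbf{\phi}}{\mbf{\widehat{\phi}}} = 1$ gives $\bra{\mbf{\widehat{\phi}}}{\mbf{v}_i} = 0$, so $\mbf{P}\mbf{v}_i = \mbf{\phi}\,\bra{\mbf{\widehat{\phi}}}{\mbf{v}_i} = 0$ and \eqref{limit M barczy} implies $\|\mbf{M}(t)\mbf{v}_i\|\leq c_1\|\mbf{v}_i\|e^{(\lambda_1 - c_2)t}$. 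Since for every $i\in E$,
\begin{equation*}
e^{-\lambda_1 t}X_t(i) = \widehat{\phi}_i W_t + e^{-\lambda_1 t}\bra{\mbf{v}_i}{\mbf{X}_t},
\end{equation*}
and $W_t\to W_\infty$ $\p_{\mbf{x}}$-a.s.\ by non-negative martingale convergence, the theorem reduces to showing that the residual $R^i_t := e^{-\lambda_1 t}\bra{\mbf{v}_i}{\mbf{X}_t}$ vanishes $\p_{\mbf{x}}$-a.s., for each $i\in E$.

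The case when the $(x\log x)$ condition fails is handled immediately by Theorem \ref{xlogx theorem}(ii): then $W_\infty = 0$ $\p_{\mbf{x}}$-a.s., so $W_t = \sum_j\phi_j\,e^{-\lambda_1 t}X_t(j)\to 0$, and since each $\phi_j > 0$ and $X_t(j)\geq 0$, the nonnegativity of every summand forces $e^{-\lambda_1 t}X_t(j)\to 0$ for every $j$, proving the claim with both sides equal to $\mbf{0}$. It therefore suffices to show the residual vanishes under the $(x\log x)$ condition.

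Under $(x\log x)$ the first moment is exponentially small: $\mathbb{E}_{\mbf{x}_0}[R^i_t] = e^{-\lambda_1 t}\,\mbf{x}_0^T\mbf{M}(t)\mbf{v}_i = O(e^{-c_2 t})$. To upgrade this to almost-sure convergence, I would use \eqref{useful equation} with $\mbf{x}$ replaced by $\mbf{v}_i$ and $s=0$:
\begin{equation*}
\bra{\mbf{v}_i}{\mbf{X}_t} = \bra{\mbf{M}(t)\mbf{v}_i}{\mbf{X}_0} + \mathcal{N}^i_t,
\end{equation*}
where $\mathcal{N}^i_t$ is the sum of the two stochastic integrals from $0$ to $t$, i.e.\ the terminal value at time $t$ of the martingale $r\mapsto\bra{\mbf{M}(t-r)\mbf{v}_i}{\mbf{X}_r}$ from \eqref{completly useful equation}. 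Since the first term is $O(e^{-c_2 t})$ after multiplication by $e^{-\lambda_1 t}$, the task reduces to showing $e^{-\lambda_1 t}\mathcal{N}^i_t\to 0$ $\p_{\mbf{x}}$-a.s.

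The main obstacle, and I expect the technical core of the argument, is controlling $\mathcal{N}^i_t$ without any moment assumption stronger than $(x\log x)$. The standard workaround, in the spirit of \cite{bp2005} and consistent with the spine-based analysis carried out in the proof of Theorem \ref{xlogx theorem} in Section \ref{spine and martingale}, is a truncation-and-Borel-Cantelli scheme along integer times $t=n$: decompose each Poisson measure $N_i$ in \eqref{sde} into a small-jump part (e.g.\ $\|\mbf{z}\|\leq e^{\lambda_1 n}$) and a large-jump part. The small-jump contribution admits a workable second-moment bound that, via Doob's $L^2$ inequality applied to the killed martingale of \eqref{completly useful equation}, produces a sum that is summable in $n$; the large-jump contribution is controlled in $L^1$ by the moment integrability supplied by $(x\log x)$, and is summable for the same reason it forces $L^1$-convergence of $W_t$. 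Borel-Cantelli then yields $e^{-\lambda_1 n}\mathcal{N}^i_n\to 0$ $\p_{\mbf{x}}$-a.s.\ along integers, and a final Doob argument on each interval $[n,n+1]$ applied to the martingale $r\mapsto\bra{\mbf{M}(n+1-r)\mbf{v}_i}{\mbf{X}_r}$ upgrades this to the continuous-time statement.
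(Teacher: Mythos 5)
Your overall strategy is sound and, at its technical core, coincides with the paper's: both arguments rest on the semimartingale identities \eqref{useful equation}--\eqref{completly useful equation}, a truncation of the jump measures at level ${\rm e}^{\lambda_1 s}$, second-moment plus Borel--Cantelli estimates for the Brownian and small-jump parts, an $L^1$ estimate for the big-jump part under the $(x\log x)$ condition, and a lattice-to-continuous interpolation via Doob's maximal inequality. The genuinely different ingredient is your test vector $\mbf{e}_k-\widehat\phi_k\mbf\phi$, which is orthogonal to $\widehat{\mbf\phi}$ and hence annihilated by $\mbf P$, so that \eqref{limit M barczy} supplies an extra factor ${\rm e}^{-c_2(t-u)}$ inside every stochastic integral. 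The paper instead works with $\mbf M(m\sigma)\mbf e_k$ directly and recovers $\widehat\phi(k)W_\infty$ through the sandwich $r_m\widehat\phi(k)W_{n\sigma}\le {\rm e}^{-\lambda_1(n+m)\sigma}\bra{\mbf M(m\sigma)\mbf e_k}{\mbf X_{n\sigma}}\le R_m\widehat\phi(k)W_{n\sigma}$ followed by a second limit $m\to\infty$ (Proposition \ref{lattice case}); your projection removes that double limit and is, in that respect, cleaner.

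Two points in your sketch need more care. First, the summability in $n$ of the $L^1$ bounds for the big-jump part is not ``for the same reason'' as the $L^1$-convergence of $W$. If one bounds the big-jump integral as the paper does (with test vector $\mbf M(\cdot)\mbf e_k$), the resulting bound is of order $\int_{\bra{\mbf 1}{\mbf z}>{\rm e}^{\lambda_1\sigma n}}\bra{\mbf 1}{\mbf z}\ln\bra{\mbf 1}{\mbf z}\,\mu_i(\ud\mbf z)$, whose sum over $n$ is finite only under an $x(\log x)^2$ moment; this is precisely why the paper does not use Borel--Cantelli there but instead observes that the dominating random variable is monotone in $n$ with vanishing expectation. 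In your setup the extra ${\rm e}^{-c_2(n-u)}$ does make the $L^1$ bounds summable --- they become a convolution of ${\rm e}^{-c_2\,\cdot}$ with the function $u\mapsto\int_{\bra{\mbf 1}{\mbf z}>{\rm e}^{\lambda_1 u}}\bra{\mbf 1}{\mbf z}\,\mu_i(\ud\mbf z)$, which is integrable exactly under $(x\log x)$ --- but this has to be checked explicitly; it is where your projection earns its keep. Second, the interpolation over $[n,n+1]$ is insufficient as stated: comparing $\bra{\mbf e_k-\widehat\phi_k\mbf\phi}{\mbf X_t}$ with $\bra{\mbf M(n+1-t)(\mbf e_k-\widehat\phi_k\mbf\phi)}{\mbf X_t}$ leaves an error of order $\sup_{u\in[0,1]}\|\mbf M(u)\mbf a-\mbf a\|\cdot W_t$, which does not vanish as $n\to\infty$ for a fixed mesh. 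As in the paper, you must run the lattice argument for every mesh $\sigma>0$ and only then send $\sigma\to0$.
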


As a corollary we obtain the following  convergence on rates of types.

\begin{theorem}
For any $\mbf{x}\in\R^d_+\setminus\{\mbf{0}\}$ we have, conditional on non-extinction,
\begin{equation*}
\underset{t\rightarrow \infty}{\lim}\frac{\mbf{X}_t}{\bra{\mbf{1}}{\mbf{X}_t}}=\frac{\widehat{\mbf{\phi}}}{\bra{\mbf{1}}{\widehat{\mbf{\phi}}}},\qquad \P_{\mbf{x}}\mbox{ a.s.}
\end{equation*}
\end{theorem}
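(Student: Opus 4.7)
The plan is to derive the statement as an immediate consequence of Theorem \ref{tstrong1}. That theorem gives $e^{-\lambda_1 t}\mbf{X}_t\to W_\infty\widehat{\mbf{\phi}}$ almost surely under $\P_{\mbf{x}}$. Taking inner product with $\mbf{1}$ and using that all coordinates of $\widehat{\mbf{\phi}}$ are strictly positive yields the scalar convergence $e^{-\lambda_1 t}\langle\mbf{1},\mbf{X}_t\rangle\to W_\infty\langle\mbf{1},\widehat{\mbf{\phi}}\rangle$ almost surely. On the event $\{W_\infty>0\}$ one can divide the vector convergence by the scalar one to obtain
$$\frac{\mbf{X}_t}{\langle\mbf{1},\mbf{X}_t\rangle}=\frac{e^{-\lambda_1 t}\mbf{X}_t}{e^{-\lambda_1 t}\langle\mbf{1},\mbf{X}_t\rangle}\longrightarrow\frac{\widehat{\mbf{\phi}}}{\langle\mbf{1},\widehat{\mbf{\phi}}\rangle},$$
with the random factor $W_\infty$ cancelling between numerator and denominator. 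The corollary thus reduces to showing that, conditional on non-extinction, $W_\infty>0$ almost surely.

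The main obstacle is the reverse inclusion $\mathcal{S}:=\{\mbf{X}_t\neq \mbf{0}\text{ for all }t\geq 0\}\subseteq\{W_\infty>0\}$ up to a $\P_{\mbf{x}}$-null set, since the opposite inclusion is automatic. The standard route is to use the branching property to make $W_\infty$ additive across independent pieces, from which the vector $\mbf{a}$ with entries $a_i:=\P_{\mbf{e}_i}(W_\infty=0)$ is forced to be a fixed point of the nonlinear semigroup associated with \eqref{differential}, namely $\mbf{v}(t,-\log\mbf{a})=-\log\mbf{a}$ for every $t\geq 0$. Comparing with the fixed-point characterisation of the extinction probabilities, and using Theorem \ref{xlogx theorem} under the $x\log x$ condition to obtain $\E_{\mbf{e}_i}[W_\infty]=\phi_i>0$ and hence rule out $a_i=1$, identifies $\mbf{a}$ with the vector of extinction probabilities.

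Once this identification is in place the proof is complete: on $\mathcal{S}$ one has $W_\infty>0$ almost surely, so the ratio computation above yields the deterministic limit $\widehat{\mbf{\phi}}/\langle\mbf{1},\widehat{\mbf{\phi}}\rangle$. I expect the fixed-point comparison to be the most delicate step, but it parallels the classical multi-type Galton--Watson argument of Theorem V.6.1 in \cite{an}.
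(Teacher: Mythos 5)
First, a point of comparison: the paper itself gives no proof of this corollary (Section 2 proves Theorem \ref{xlogx theorem} and Section 3 proves Theorem \ref{tstrong1}; the corollary is simply stated), so there is no printed argument to measure yours against. Your first paragraph is certainly the intended reduction and is correct as far as it goes: on $\{W_\infty>0\}$ the scalar limit $\lim_t {\rm e}^{-\lambda_1 t}\bra{\mbf{1}}{\mbf{X}_t}=W_\infty\bra{\mbf{1}}{\widehat{\mbf{\phi}}}$ is strictly positive, so the denominator is eventually nonzero and the quotient of the two almost sure limits gives the claim, with $W_\infty$ cancelling. Everything therefore hinges on the identification of $\{W_\infty>0\}$ with the survival event, and that is where your write-up has genuine gaps rather than a complete argument.

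Concretely: (i) you must first pin down what ``extinction'' means. For continuous-state processes the events $\{\mbf{X}_t=\mbf{0}\ \text{for some}\ t\}$ and $\{\lim_{t\to\infty}\bra{\mbf{1}}{\mbf{X}_t}=0\}$ differ in general (a supercritical CSBP with no Gaussian part and light jump activity never hits $\mbf{0}$ yet can still tend to $\mbf{0}$). Your set is the complement of the first, while the quantity your fixed-point equation naturally controls is the second; with your definition the inclusion you call ``the main obstacle'' can actually be false, since on $\{\lim_t\bra{\mbf{1}}{\mbf{X}_t}=0\}\setminus\{\exists t:\mbf{X}_t=\mbf{0}\}$ one has $W_\infty=0$ without hitting $\mbf{0}$. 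The correct target is $\{W_\infty=0\}=\{\lim_t\bra{\mbf{1}}{\mbf{X}_t}=0\}$ up to $\p_{\mbf{x}}$-null sets. (ii) The fixed-point comparison is only a plan: writing $-\log\P_{\mbf{e}_i}(W_\infty=0)$ and $-\log\P_{\mbf{e}_i}(\text{extinction})$ as vectors $\mbf{k}$ satisfying $\mbf{v}(t,\mbf{k})=\mbf{k}$ for all $t$ is the easy half; you still need a uniqueness statement for such fixed points with all entries in $(0,\infty]$, or equivalently that $\mbf{v}(t,\mbf{u})$ converges as $t\to\infty$ to the extinction vector for every $\mbf{u}$ with strictly positive entries. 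Neither is in the paper and neither is supplied. (iii) Your route requires the $x\log x$ condition to rule out $\P_{\mbf{e}_i}(W_\infty=0)=1$ via $\E_{\mbf{e}_i}[W_\infty]=\phi(i)>0$, but the corollary carries no such hypothesis; if the condition fails then $W_\infty\equiv 0$ by Theorem \ref{xlogx theorem} and the entire approach collapses (one would need a Seneta--Heyde type norming instead). At minimum you should state that you prove the result under the $x\log x$ condition.
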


The remainder of this paper is structured as follows. In Section \ref{spine and martingale} we prove Theorem 1. The proof of Theorem 2 is given in Section \ref{section theorem 2}.

\section{Spine decomposition } \label{spine and martingale}
A now classical way to prove Theorem \ref{spine and martingale} is to find a spine decomposition for $\{\mbf{X}_t, t\geq 0\}$ under the
Doob $h$-transform
associated with $W$. More precisely, for any $\mbf{x}\in\R^d_+$, using the martingale $(W_t,t\geq 0)$ we define a new probability measure via
$$\left.\frac{\ud \widetilde{\P}_{\mbf{x}}}{\ud \p_{\mbf{x}}}\right|_{\mathcal{ F}_t}=\frac{1}{\bra{\mbf\phi}{ \mbf{x}}}W_t,\quad t\ge 0.$$
Where $\{\mathcal{ F}_t, t\ge 0\}$ is the natural filtration generated by  $\mbf{X}$.

Let $(\eta_t, t\geq 0)$  be a Markov chain  on $E$ with infinitesimal generator $\mbf{L}$, a $d\times d$ matrix defined  by
$${L}_{ij}=\frac{1}{\phi(i)}\left(B^T_{ij}-\Ind_{\{i=j\}}\lambda_1\right)\phi(j), \qquad i,j\in E.$$
Denote by $(\mbf{P}_i^{\mbf{\phi}}, i\in E)$ the probabilities of $\eta$ such that $\mbf{P}_i^{\mbf{\phi}}(\eta_0=i)=1$ for all $i\in \E$.

\begin{theorem}\label{spinesgpn}

\underline{}If $\mbf{X}$ is a MCSBP, then for any $\mbf{x}\in\R^d_+$ and $\mbf{f}\in \R_+^d$,
	\begin{align*}
	\widetilde{\e}_{\mbf{x}}&\left[{\rm e}^{-\langle \mbf{f}, \mbf{X}_t\rangle} \right]=\e_{\mbf{x}}\left[
	{\rm e}^{-\langle \mbf{f}, \mbf{X}_t\rangle}\right] \times\notag\\
	&\hspace{2cm}\mbf{E}_{\mbf{\phi x}}^{\mbf{\phi}}\left[\exp\left\{-\int_0^t \left(2c(\eta_s)v_{\eta_{s}}({t-s},\mbf{f})+	\int_{\R^d_+} {z}(\eta_s)(1- {\rm e}^ {-\langle\mbf{v}({t-s},\mbf{f}),\mbf{z}\rangle})\mu_{\eta_{s}}(\ud \mbf{z})\right)\ud s\right\}\right.\times
	\notag\\
	&\hspace{8cm}
	\underset{s\leq t}{\prod}{\mathcal{A}}_{\eta_{s-},\eta_s}^{t-s}\Bigg],
	\label{semigroupdecompn}\end{align*}
	where the matrices $\{{\mathcal{A}}^{ s}: s\geq 0\}$
are given by
	$${\mathcal{A}}^{ s}_{i,j}=\left(\Indi{i\neq j}\frac{1} {B^T_{ij}}
	\int_{\R^d_+}{z}(j) ( {\rm e}^ {-\langle\mbf{v}(s,\mbf{f}),\mbf{z}\rangle}-1){\mu_{i}(\ud \mbf{z})}+1\right),\quad i,j\in E,$$
	and
 $$\mbf{P}_{ \mbf{\phi x}}^{\mbf{\phi}}(\cdot) = \sum_{i \in E} \frac{ \phi(i)x(i)}{\langle {\mbf \phi}, {\mbf{x}}\rangle}\mbf{P}^{\mbf{\phi}}_i(\cdot)
 $$
	with associated expectation operator $\mbf{E}_{ \mbf{\phi x}}^{\mbf{\phi}}(\cdot)$.
\end{theorem}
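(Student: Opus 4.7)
My plan is to reduce the identity, via the $h$-transform definition of $\widetilde{\p}_{\mbf{x}}$ and differentiation of the Laplace transform \eqref{laplace} in the direction $\mbf{\phi}$, to an ODE for a scalar profile $h_i(t)$ derived from an eigenvector-weighted first-moment derivative of $\mbf{v}(t,\mbf{f})$, and then to recognise the right-hand side as the Feynman--Kac representation of that ODE driven by the spinal chain $\eta$. By the definition of $\widetilde{\p}_{\mbf{x}}$ and $W_t = \mathrm{e}^{-\lambda_1 t}\bra{\mbf{\phi}}{\mbf{X}_t}$, the left-hand side equals $\bra{\mbf{\phi}}{\mbf{x}}^{-1}\mathrm{e}^{-\lambda_1 t}\e_{\mbf{x}}[\bra{\mbf{\phi}}{\mbf{X}_t}\mathrm{e}^{-\bra{\mbf{f}}{\mbf{X}_t}}]$. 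Differentiating \eqref{laplace} with $\mbf{f}$ replaced by $\mbf{f}+\theta\mbf{\phi}$ in $\theta$ at $\theta=0$ yields $\e_{\mbf{x}}[\bra{\mbf{\phi}}{\mbf{X}_t}\mathrm{e}^{-\bra{\mbf{f}}{\mbf{X}_t}}] = \mathrm{e}^{-\bra{\mbf{x}}{\mbf{v}(t,\mbf{f})}}\bra{\mbf{x}}{\mbf{V}(t)}$, with $\mbf{V}(t):=\partial_\theta \mbf{v}(t, \mbf{f}+\theta\mbf{\phi})|_{\theta=0}$. Setting $h_i(t):=\mathrm{e}^{-\lambda_1 t}V_i(t)/\phi(i)$, the theorem reduces to establishing $h_i(t) = U_i(t)$ for every $i\in E$, where $U_i(t)$ denotes the $\mbf{P}^{\mbf{\phi}}_i$-expectation of the Feynman--Kac functional in the big brackets on the right-hand side.

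Next, I derive an ODE for $h_i$. Differentiating \eqref{differential} in $\theta$ gives $\partial_t V_i(t) = -\sum_j \partial_{u_j}\psi_i(\mbf{v}(t,\mbf{f}))V_j(t)$ with $V_i(0)=\phi(i)$, and from \eqref{def psi} one has $\partial_{u_j}\psi_i(\mbf{u}) = -B_{ji} + 2c_i u_i\delta_{ij} + \int z(j)(1-\mathrm{e}^{-\bra{\mbf{u}}{\mbf{z}}})\mu_i(\ud\mbf{z})$. Substituting $V_i = \mathrm{e}^{\lambda_1 t}\phi(i)h_i$, using $\mbf{B}^T\mbf{\phi} = \lambda_1\mbf{\phi}$ to produce the generator $\mbf{L}$ of $\eta$, isolating the $j=i$ term of the $\mu_i$-integral into the potential $k_i(t) := 2c_i v_i(t,\mbf{f}) + \int z(i)(1-\mathrm{e}^{-\bra{\mbf{v}(t,\mbf{f})}{\mbf{z}}})\mu_i(\ud\mbf{z})$, and exploiting the key algebraic identity $L_{ij}(1-\mathcal{A}^t_{i,j}) = (\phi(j)/\phi(i))\int z(j)(1-\mathrm{e}^{-\bra{\mbf{v}(t,\mbf{f})}{\mbf{z}}})\mu_i(\ud\mbf{z})$ for $j\neq i$, the ODE for $h$ assumes the Feynman--Kac form
\begin{equation*}
\partial_t h_i(t) = L_{ii}h_i(t) - k_i(t)h_i(t) + \sum_{j\neq i}L_{ij}\mathcal{A}^t_{i,j}h_j(t), \qquad h_i(0)=1.
\end{equation*}

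To connect $h_i$ with $U_i$, fix $t>0$ and, for $r\in[0,t]$, define
\begin{equation*}
M_r := \exp\Big\{-\int_0^r k_{\eta_s}(t-s)\,\ud s\Big\}\prod_{s\leq r}\mathcal{A}^{t-s}_{\eta_{s-},\eta_s}\,h_{\eta_r}(t-r).
\end{equation*}
An It\^o-type expansion under $\mbf{P}_i^{\mbf{\phi}}$ shows that $M$ is a local martingale precisely because $h$ satisfies the ODE above; since $E$ is finite, $h$ is locally bounded on $[0,t]$, and $\mathcal{A}^s_{i,j}\in[0,1]$, the process $M$ is bounded and hence a true martingale, so $\mbf{E}_i^{\mbf{\phi}}[M_t] = M_0$ gives $U_i(t) = h_i(t)$. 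The principal hurdle is the ODE-derivation step: justifying the $\theta$-differentiation of \eqref{differential} (via the explicit form of \eqref{def psi} and dominated convergence) and carefully sorting the contributions from $\mbf{B}$, $c_i$, and the diagonal ($j=i$) versus off-diagonal ($j\neq i$) parts of the $\mu_i$-integral into the ingredients $\mbf{L}$, $k_i$ and $L_{ij}\mathcal{A}^t_{i,j}$; once this algebraic matching is achieved, the martingale computation in the last step is routine.
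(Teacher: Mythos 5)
Your proposal is correct, and its first half coincides with the paper's argument: the same $h$-transform reduction, the same differentiation of \eqref{laplace} and \eqref{differential} in the direction $\mbf{\phi}$, and the same quantity (your $h_i(t)$ is exactly the paper's $\theta^t(i)$). The difference lies in how the Feynman--Kac representation is extracted. The paper stays with the integral-equation form of $\theta^t$, rewrites it through the semigroup ${\rm e}^{t\mbf{L}}$ of the spine, splits the inner product $\bra{\theta^{t-s}}{\mbf{\phi}\circ\mbf{z}}$ into its diagonal and off-diagonal parts, and then invokes an external result (\cite[Lemma 6.1]{CRS} on L\'evy systems) to convert the resulting equation into the expectation of the multiplicative functional $\exp\{-\int_0^t q\}\prod_{s\le t}\mathcal{A}^{t-s}_{\eta_{s-},\eta_s}$. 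You instead pass to the differential form of the equation for $h$ and verify directly that $M_r=\exp\{-\int_0^r k_{\eta_s}(t-s)\,\ud s\}\prod_{s\le r}\mathcal{A}^{t-s}_{\eta_{s-},\eta_s}h_{\eta_r}(t-r)$ is a bounded martingale, so that $\mbf{E}^{\mbf{\phi}}_i[M_t]=M_0$ yields $U_i(t)=h_i(t)$. The algebra is the same in both routes --- your identity $L_{ij}(1-\mathcal{A}^t_{i,j})=(\phi(j)/\phi(i))\int z(j)(1-{\rm e}^{-\bra{\mbf{v}(t,\mbf{f})}{\mbf{z}}})\mu_i(\ud\mbf{z})$ checks out, and the bound $\mathcal{A}^s_{i,j}\in[0,1]$ (which you should explicitly attribute to \eqref{inequality B}, as the paper does) is what makes $M$ bounded --- but your version is self-contained where the paper's leans on the cited lemma. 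The only step you should spell out in a full write-up is the compensator computation for the jump part of $M$ (the L\'evy-system identity for the finite-state chain $\eta$), which is elementary here precisely because $E$ is finite and the jump rates are bounded.
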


\begin{proof}
	
	We start by noting that
	\begin{align*}
	\widetilde{\e}_{\mbf{x}}\left[{\rm e}^{-\langle \mbf{f}, \mbf{X}_t\rangle} \right]&=\frac{{\rm e}^{-\lambda_1 t}}{\langle  \mbf\phi,\mbf{x}\rangle}\e_{\mbf{x}}\left[\langle\mbf{\phi}, \mbf{X}_t\rangle
	{\rm e}^{-\langle \mbf{f}, \mbf{X}_t\rangle}\right].
	\end{align*}
	Replacing $\mbf{f}$ by $\mbf{f}+\lambda \mbf\phi$ in (\ref{laplace}) and (\ref{differential}) and differentiating with respect to $\lambda$ and then setting $\lambda  =0$, we obtain
	\begin{equation}
	\begin{split}
	\widetilde{\e}_{\mbf{x}}\left[{\rm e}^{-\langle \mbf{f}, \mbf{X}_t\rangle} \right]	&=	\e_{\mbf{x}}\left[{\rm e}^{-\langle \mbf{f}, \mbf{X}_t\rangle}\right]\underset{i\in E}{\sum}\frac{\phi(i)x(i)}{\langle \mbf\phi,\mbf{x}\rangle}\theta^t(i)\\
	&=
	\e_{\mbf{x}}\left[{\rm e}^{-\langle \mbf{f}, \mbf{X}_t\rangle}\right]\frac
	{\langle  \theta^t, {\mbf\phi}\circ{\mbf{x}}\rangle}{\langle \mbf\phi,\mbf{x}\rangle},
	\label{suma h}
	\end{split}
	\end{equation}
	where $\circ$ denotes element wise multiplication of vectors and, for $t\geq 0$, $
	\theta^t$ is the vector with entries
	\[
	\theta^t(i) := \frac{1}{
		\phi(i)}{\rm e}^{-\lambda_1 t}\left.\frac{\partial}{\partial \lambda} v_i(t,\mbf{f}+\lambda \mbf{\phi})\right|_{\lambda  =0}.  \qquad
	\]
By an integration by parts, using \eqref{differential} and \eqref{def psi} 
and that $(\mbf{B}\mbf{e}_i)_j=\mbf{B}_{ij}^T$, we get that $\theta^t(i)$ is also the
	unique  vector  solution to
	\begin{align*}
	\theta^t(i) &=1
	+\int_0^{t}	\frac{1}{\phi(i)}\big[( B^T-\lambda_1 I)(\mbf{\phi}\circ \theta^s)\big](i)\ud s
	-\int_0^{t}\theta^s(i)2c(i)v_i(s,\mbf{f})\ud s \nonumber\\
	&\hspace{2cm}+\int_0^{ t} \int_{\R^d_+}\bra{\theta^s}{\frac{\mbf{\phi}\circ \mbf{z}}{\phi(i)}}( {\rm e}^ {-\langle\mbf{v}(s,\mbf{f}),\mbf{z}\rangle}-1)\mu_i(\ud \mbf{z})\ud s.\nonumber
	\end{align*}

	\noindent Recall the definition of $\mbf{L}$ and note that it conforms to the definition of an intensity matrix of a Markov chain, thanks to the fact that $\mbf\phi$ is an eigenvector of $\mbf{B}^T$. A (vectorial) integration by parts in the spirit of Theorem 3.1.2 of \cite{dynkin},
	\begin{equation*}
	\begin{split}
	[{\theta}^t](i) =&[{\rm e}^{t\mbf{L}}\mbf{1}](i) -\int_0^{t}{\rm e}^{(t-s)\mbf{L}}\left[{\theta}^s(\cdot)\circ(2c(\cdot)v_\cdot(s,\mbf{f}))\right](i)\ud s \nonumber\\
	&\hspace{.5cm}+\int_0^{ t}
	{\rm e}^{(t-s)\mbf{L}}\left[
	\int_{\R^d_+}\bra{\theta^s}{\mbf{\phi}\circ \mbf{z}}( {\rm e}^ {-\langle\mbf{v}(s,\mbf{f}),\mbf{z}\rangle}-1)\frac{\mu_\cdot(\ud \mbf{z})}{\phi(\cdot)}\right](i)\ud s,\nonumber
	\end{split}
	\end{equation*}
where $\mbf{1} =(1,\cdots, 1)^T\in\mathbb{R}_+^d$.
	Then appealing to  the fact that $\{{\rm e}^{t\mbf{L}}:t\geq 0\}$ is the semigroup of $(\eta, \mbf{P}^{\mbf{\phi}}_i)$, $i\in E$,
	\begin{align}
	{\theta}^t(i) &=\mbf{E}_i^{\mbf{\phi}}[\mbf{1}] -\int_0^{t}\mbf{E}_i^{\mbf{\phi}}\left[2{\theta}^s(\eta_{t-s})c(\eta_{t-s})v_{\eta_{t-s}}(s,\mbf{f}))\right]\ud s \nonumber\\
	&\hspace{.5cm}+\int_0^{ t}
	\mbf{E}_i^{\mbf{\phi}}\left[
	\int_{\R^d_+}\bra{\theta^s}{\mbf{\phi}\circ \mbf{z}}( {\rm e}^ {-\langle\mbf{v}(s,\mbf{f}),\mbf{z}\rangle}-1)\frac{\mu_{\eta_{t-s}}(\ud \mbf{z})}{\phi(\eta_{t-s})}\right]\ud s.
\label{theprevious}
	\end{align}
	Next, we make a change of variable $u=t-s$ and separate the last inner product into  two parts. For all $s\leq t$ and $\mbf{z}\in \R_+^d$
	$$\bra{\theta^{t-s}}{\mbf{\phi}\circ \mbf{z}}=\theta^{t-s}(\eta_s)\phi(\eta_s) {z}(\eta_s)+\underset{j\neq \eta_s}{\sum}\theta^{t-s}(j)\phi(j) {z}(j). $$\
Therefore, \eqref{theprevious} is transformed into
	\begin{equation*}
	\begin{split}
	{\theta}^t(i) =	&\mbf{E}_i^{\mbf{\phi}}[g(\eta_t)] -\int_0^{t}\mbf{E}_i^{\mbf{\phi}}\left[2{\theta}^{t-s}(\eta_{s})c(\eta_{s})v_{\eta_{s}}({t-s},\mbf{f}))\right]\ud s \nonumber\\
	&\hspace{.5cm}+\int_0^{ t}
	\mbf{E}_i^{\mbf{\phi}}\left[ \theta^{t-s}(\eta_s)
	\int_{\R^d_+} {z}(\eta_s)( {\rm e}^ {-\langle\mbf{v}({t-s},\mbf{f}),\mbf{z}\rangle}-1)\mu_{\eta_{s}}(\ud \mbf{z})\right]\ud s\\
	&\hspace{.5cm}+\int_0^{ t}
	\mbf{E}_i^{\mbf{\phi}}\left[ \underset{j\neq \eta_s}{\sum}\theta^{t-s}(j)\frac{\phi(j)} {\phi(\eta_{s})}
	\int_{\R^d_+}{z}(j) ( {\rm e}^ {-\langle\mbf{v}({t-s},\mbf{f}),\mbf{z}\rangle}-1){\mu_{\eta_{s}}(\ud \mbf{z})}\right]\ud s.
	\end{split}
	\end{equation*}

	Recall that \eqref{inequality B} holds and hence, by applying
\cite[Lemma 6.1]{CRS}
 to the L\'evy system associated to $\mbf{L}$, using, in their notation, the functions
	$$q(s,i)=2c(i)v_{i}(s,\mbf{f})+	\int_{\R^d_+}{z}(i)(1- {\rm e}^ {-\langle\mbf{v}(s,\mbf{f}),\mbf{z}\rangle})\mu_{i}(\ud \mbf{z})$$
	and
	$$F(s,i,j)=\ln\left(	\Indi{i\neq j}\frac{1}{B_{ji}}\int_{\R^d_+}{z}(j) ( {\rm e}^ {-\langle\mbf{v}(s,\mbf{f}),\mbf{z}\rangle}-1)\mu_{i}(\ud \mbf{z})+1\right),$$
	we obtain
	
	{\small \begin{align*}
		\theta^{t}(i)= &\mbf{E}_{i}^{\mbf{\phi}}\left[\exp\left\{-\int_0^t \left(2c(\eta_s)v_{\eta_{s}}({t-s},\mbf{f})+	\int_{\R^d_+} {z}(\eta_s)(1- {\rm e}^ {-\langle\mbf{v}({t-s},\mbf{f}),\mbf{z}\rangle})\mu_{\eta_{s}}(\ud \mbf{z})\right)\ud s\right\} \underset{s\leq t}{\prod}{\mathcal{A}}^{t-s}_{\eta_{s-},\eta_s}\right].
		\end{align*}}
\end{proof}

Theorem \ref{spinesgpn} suggests that  the process $(\mbf{X}_t,\widetilde{\p}_{\mbf{x}})$ is equal in law to a process $\{\mbf{\Gamma}_t : t\geq 0\}$,  whose law is henceforth denoted by ${\rm P}_{\mbf{x}}$, $\mbf{x}\in\mathbb{R}^d_+$, where
\begin{equation*}
\mbf{\Gamma}_t = \mbf{X}'_t + \sum_{s\leq t:{c}} \mbf{X}^{{\rm c}, s}_{t-s} + \sum_{s\leq t: {\rm d}}\mbf{X}^{{\rm d}, s}_{t-s} + \sum_{s\leq t: {\rm j}}\mbf{X}^{{\rm j}, s}_{t-s}, \qquad t\geq 0,
\label{spine}
\end{equation*}
such that  $\mbf{X}'$ is an independent copy of $(\mbf{X}_t,\p_{\mbf{x}})$ and the processes $\mbf{X}^{{\rm c}, s}_{\cdot}$, $\mbf{X}^{{\rm d}, s}_{\cdot}$ and $\mbf{X}^{{\rm j}, s}_{\cdot}$ are defined through a process of immigration as follows: Given the path of the Markov chain $(\eta, \mbf{P}^{\mbf{\phi}}_{\mbf{\phi}\mbf{x}})$,

\bigskip

\noindent {\bf [continuous immigration]}  in a Poissonian way  a $\psi$-MCSBP $\mbf{X}^{{\rm c},s}_{\cdot}$ immigrates at $(s, \eta_s)$ with rate $\ud s\times 2c(\eta_s)\ud\mathbf{N}_{\eta_s}$,

\bigskip

\noindent {\bf [discontinuous immigration]} in a Poissonian way  a $\psi$-MCSBP $\mbf{X}^{{\rm d}, s}_\cdot$ immigrates at $(s, \eta_s)$ with rate $\ud s\times \int_{\R^d_+} {z}(\eta_s)\mu_{\eta_{s}}(\ud \mbf{z})\p_{\mbf{z}}$
\bigskip

\noindent {\bf [jump immigration]} at each jump time $s$ of $\eta$, a $\psi$-MCSBP $\mbf{X}^{{\rm j}, s}_\cdot$ immigrates  at $(s, \eta_s)$
with law $ \int_{\R^d_+} \nu_{\eta_{s-},\eta_s}(\ud \mbf{z})\p_{\mbf{z}}$, where for each $i,j\in E$,
	$$\nu_{i,j}(\ud \mbf{z})=\Indi{i\neq j}\frac{1} {B^T_{ij}}
	{z}(j) \mu_{i}(\ud \mbf{z})+\left(1-\Indi{i\neq j}\frac{1} {B^T_{ij}}
	\int_{\R^d_+}{v}(j) \mu_{i}(\ud \mbf{v})\right)\delta_{\mbf{0}}(\ud \mbf{z}).$$
\bigskip
\noindent Given $\eta$, the above three immigration  processes are independent.

In the above description,
the quantity $\mathbf{N}_i$ is the excursion measure of the $\psi$-MCSBP corresponding to
$\p_{\mbf{e}_i}$.
To be more precise, \cite{DK04} showed that associated to the laws
$\{\p_{\mbf{e}_i}: i\in E\}$ are the measures $\{\mathbf{N}_i:i\in E\}$,
defined on the same measurable space, which satisfy
$$\mathbf{N}_i(1-{\rm e}^{-\langle \mbf{f}, \mbf{X}_t \rangle})=-\log\e_{\mbf{e}_i}({\rm e}^{-\langle \mbf{f}, \mbf{X}_t \rangle}),$$
for all $t\geq 0$.  A particular feature of $\mathbf{N}_i$ that we shall use later is that
\begin{equation}
\mathbf{N}_i(\langle \mbf{f}, \mbf{X}_t \rangle) = \e_{\mbf{e}_i}(\langle \mbf{f}, \mbf{X}_t \rangle).
\label{N}
\end{equation}

Observe that the processes $\mbf{X}^{\mbf{c}}$, $\mbf{X}^{\mbf{d}}$ and $\mbf{X}^{\mbf{j}}$ are initially zero valued, therefore,
if $\mbf{\Gamma}_0=\mbf{x}$ then $\mbf{X}_0'=\mbf{x}$.
The following result corresponds to a classical spine decomposition, albeit now for the setting of an MCSBP. Note, we henceforth refer to the process $\eta$ as the {\it spine}. By following the same proof as Theorem 5 in  \cite{kp} we can easily establish the next result.

\begin{theorem}[Spine decomposition]\label{gamma}
For any $\mbf{x}\in \R^d_+$,
$(\mbf{\Gamma}_t, t\ge 0; {\rm P}_{\mbf{x}})$ is equal in law to $(\mbf{X}_t,t\ge 0; \widetilde{\p}_{\mbf{x}})$.
\end{theorem}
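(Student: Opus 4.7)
My plan is to verify Theorem \ref{gamma} by computing ${\rm E}_{\mbf{x}}[{\rm e}^{-\bra{\mbf{f}}{\mbf{\Gamma}_t}}]$ directly from the construction of $\mbf{\Gamma}$ and matching the result with the formula in Theorem \ref{spinesgpn}. I would first condition on the spine $\eta$ under $\mbf{P}^{\mbf{\phi}}_{\mbf{\phi x}}$ and exploit the conditional independence of the four constituents $\mbf{X}'$, $\{\mbf{X}^{{\rm c},s}\}_{s\le t}$, $\{\mbf{X}^{{\rm d},s}\}_{s\le t}$ and $\{\mbf{X}^{{\rm j},s}\}_{s\le t}$, so that the conditional Laplace functional factors into four pieces to be evaluated separately.

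The contribution of $\mbf{X}'$ with $\mbf{X}'_0=\mbf{x}$ is $\e_{\mbf{x}}[{\rm e}^{-\bra{\mbf{f}}{\mbf{X}_t}}]$ by \eqref{laplace}. Applying the exponential formula for Poisson random measures to the continuous-immigration intensity $2c(\eta_s)\ud s\,\ud\mathbf{N}_{\eta_s}$ and invoking the defining identity $\mathbf{N}_i(1-{\rm e}^{-\bra{\mbf{f}}{\mbf{X}_u}})=v_i(u,\mbf{f})$ of the excursion measure yields
\[
\exp\!\left\{-\int_0^t 2c(\eta_s)v_{\eta_s}(t-s,\mbf{f})\,\ud s\right\}.
\]
The same exponential formula applied to the discontinuous-immigration intensity $\ud s\int_{\R_+^d}z(\eta_s)\mu_{\eta_s}(\ud\mbf{z})\p_{\mbf{z}}$, combined with $\e_{\mbf{z}}[{\rm e}^{-\bra{\mbf{f}}{\mbf{X}_u}}]={\rm e}^{-\bra{\mbf{v}(u,\mbf{f})}{\mbf{z}}}$, gives
\[
\exp\!\left\{-\int_0^t\int_{\R_+^d}z(\eta_s)\bigl(1-{\rm e}^{-\bra{\mbf{v}(t-s,\mbf{f})}{\mbf{z}}}\bigr)\mu_{\eta_s}(\ud\mbf{z})\,\ud s\right\}.
\]

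The pivotal step is the jump-immigration calculation. At each jump time $s$ of $\eta$ with $i=\eta_{s-}\neq\eta_s=j$, the immigrated MCSBP starts from a random initial condition with law $\int_{\R_+^d}\nu_{i,j}(\ud\mbf{z})\p_{\mbf{z}}$, so its Laplace contribution is $\int_{\R_+^d}\nu_{i,j}(\ud\mbf{z}){\rm e}^{-\bra{\mbf{v}(t-s,\mbf{f})}{\mbf{z}}}$. Decomposing $\nu_{i,j}$ into the absolutely continuous part $(B^T_{ij})^{-1}z(j)\mu_i(\ud\mbf{z})$ and the compensating atom at $\mbf{0}$, this simplifies to
\[
1+\frac{1}{B^T_{ij}}\int_{\R_+^d}z(j)\bigl({\rm e}^{-\bra{\mbf{v}(t-s,\mbf{f})}{\mbf{z}}}-1\bigr)\mu_i(\ud\mbf{z})=\mathcal{A}^{t-s}_{i,j},
\]
and multiplying over the almost surely finitely many jumps of $\eta$ on $[0,t]$ delivers exactly the spine product $\prod_{s\le t}\mathcal{A}^{t-s}_{\eta_{s-},\eta_s}$ appearing in Theorem \ref{spinesgpn}.

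Averaging the product of these four factors against $\mbf{P}^{\mbf{\phi}}_{\mbf{\phi x}}$ reproduces the right-hand side of Theorem \ref{spinesgpn} verbatim, establishing equality of the one-dimensional Laplace functionals for every $\mbf{x}\in\R_+^d$, $\mbf{f}\in\R_+^d$ and $t\ge 0$. To upgrade to equality in law on path space, I would appeal to the Markov property: $(\mbf{X}_t,\widetilde{\p}_{\cdot})$ is Markov because it is the Doob $h$-transform of $\mbf{X}$ by the martingale $W$, while $(\mbf{\Gamma}_t,\eta_t)$ is Markov by construction with the claimed marginal for $\mbf{\Gamma}$; agreement of one-dimensional marginals for every starting configuration then lifts to agreement of finite-dimensional distributions. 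I expect the main technical obstacle to be the jump-immigration identification, where one must carefully separate the size-biasing factor $z(j)$ from the compensating atom at $\mbf{0}$ in $\nu_{i,j}$ and recognise that their net contribution is precisely $\mathcal{A}^{t-s}_{i,j}$.
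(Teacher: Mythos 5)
Your computation of the one-time Laplace functional of $\mbf{\Gamma}_t$ is correct and constitutes the core of the intended argument: conditioning on the spine, the exponential formula for the continuous and discontinuous Poissonian immigration produces the two exponential factors of Theorem \ref{spinesgpn} (via $\mathbf{N}_i(1-{\rm e}^{-\bra{\mbf{f}}{\mbf{X}_u}})=v_i(u,\mbf{f})$ and \eqref{laplace}), and your treatment of the jump immigration correctly splits $\nu_{i,j}$ into its size-biased part and its compensating atom at $\mbf{0}$ to recover $\mathcal{A}^{t-s}_{i,j}$. Be aware that the paper itself writes out no proof of Theorem \ref{gamma}: it defers to the proof of Theorem 5 of \cite{kp}, of which your calculation reproduces the Laplace-functional matching step.

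The gap is in your final paragraph. The claim that agreement of one-dimensional marginals for every starting configuration lifts to agreement of finite-dimensional distributions does not hold as stated, because the two processes are Markov with respect to incomparable states: $(\mbf{X}_t,\widetilde{\p}_{\mbf{x}})$ is Markov in $\mbf{X}_t$ alone, whereas $\mbf{\Gamma}$ is Markov only jointly with the spine position $\eta_t$. Conditioning at time $t_1$ on $(\mbf{\Gamma}_{t_1},\eta_{t_1})=(\gamma,i)$ restarts the construction with the spine started \emph{deterministically} at $i$, which is not the law ${\rm P}_{\gamma}$, whose spine is initialised according to $\phi(j)\gamma(j)/\bra{\mbf{\phi}}{\gamma}$; hence the two-time marginals do not follow from the one-time identity without further input. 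The missing ingredient is the spine identity
$${\rm P}_{\mbf{x}}\big(\eta_t=i\,\big|\,\sigma(\mbf{\Gamma}_s,\, s\le t)\big)=\frac{\phi(i)\Gamma_t(i)}{\bra{\mbf{\phi}}{\mbf{\Gamma}_t}},$$
which guarantees that, given the observable past of $\mbf{\Gamma}$, the process after time $t$ is again distributed as ${\rm P}_{\mbf{\Gamma}_t}$; with this in hand the induction on finite-dimensional distributions, matched against the Markov property of the $h$-transformed law $\widetilde{\p}$, goes through. Establishing this identity --- or, alternatively, computing multi-time Laplace functionals directly on both sides --- is precisely the nontrivial content of the proof of Theorem 5 in \cite{kp} that the present paper invokes and that your proposal elides.
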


\noindent For the sake of brevity, we leave the proof to the reader.

\subsection{Proof of Theorem \ref{xlogx theorem}}\label{spine2}

	We follow a well established line of reasoning. We know that $1/W_t$ is a positive $\widetilde{\p}_{\mbf{x}}$-supermartingale and hence $\lim_{t\to\infty}W_t$ exists $\widetilde{\p}_{\mbf{x}}$-almost surely. Therefore $W_t$ converges in $\mathbb{L}^1(\P_{\mbf{x}})$ to a non-degenerated limit as soon as we prove that
$\widetilde{\p}_{\mbf{x}}(\liminf_{t\to\infty}W_t<\infty)=1$.
	
	We consider the spine decomposition in Theorem \ref{gamma}. Given the spine $\eta$, let us write  $(s,\mbf{I}_s^{\rm d},\mbf{I}_s^{\rm j})_{s\geq 0}$ for the process of immigrated vector along the spine (i.e. $\mbf{I}_s^{\rm d}=\mbf{X}_0^{{\rm d},s}$ and $\mbf{I}_s^{\rm j}=\mbf{X}_0^{{\rm j},s}$). Then $(s,\mbf{I}_s^{\rm d})$ is Poissonian with intensity $\ud s\times {z}(\eta_s)\mu_{\eta_{s}}(\ud \mbf{z})$ and, if $s$ is such that $\eta_{s-}\neq \eta_s$ then $\mbf{I}_s^{\rm j}$ is distributed according to $ \nu_{\eta_{s-},\eta_s}.$
	Let  $\mathcal{S} = \sigma(\eta_s, (s,\mbf{I}_s^{\rm d},\mbf{I}_s^{\rm j}), s\geq 0)$  be the sigma algebra which informs the location of the spine	and the vector issued at each immigration time and write
\[
	Z_t: = {\rm e}^{-\lambda_1 t}\langle \mbf{\phi}, \mbf{\Gamma}_t\rangle.
	\]

Since $(\mbf{\Gamma}_\cdot, {\rm P}_{\mbf{x}})$ is equal in law to $(\mbf{X}_{\cdot}, \widetilde{\p}_{\mbf{x}})$, to prove $\widetilde{\p}_{\mbf{x}}(\liminf_{t\to\infty}W_t<\infty)=1$, we only need to prove that
\begin{equation}\label{limit-Z}{\rm P}_{\mbf{x}}(\liminf_{t\to\infty}Z_t<\infty)=1.\end{equation}
	By Fatou's Lemma
	\[
	{\rm E}_{\mbf{x}}
  [\liminf_{t\to\infty}Z_t | \mathcal{S}]\leq \liminf_{t\to\infty}{\rm E}_{\mbf{x}}[Z_t|\mathcal{S}].
	\]
	It therefore remains to show that $\liminf_{t\to\infty}{\rm E}_{\mbf{x}}[Z_t| \mathcal{S}]<\infty$,  $\widetilde{\p}_{\mbf{x}}$ a.s.

By using that the processes $\mbf{X}^{{\rm c}, s}_{\cdot}$, $\mbf{X}^{{\rm d}, s}_{\cdot}$ conditioned on $\mathcal{S}$ are Poissonian, the description of $\mbf{X}^{{\rm j}, s}_{\cdot}, \mbf{I}^{{\rm j}}_{\cdot}$ and $\mbf{I}^{{\rm d}}_{\cdot}$,  formula  \eqref{N} and that $W$ is a martingale, we have that for  $t\geq 0$ and $\mbf{x}\in\R^d_+$,
	\begin{align*}
	\liminf_{t\to\infty}{\rm E}_{\mbf{x}}[Z_t|\mathcal{S}]=& \langle\mbf{\phi}, \mbf{x}\rangle +\int_0^\infty 2 c(\eta_s) {\rm e}^{-\lambda_1 s}\phi(\eta_s)
	{\ud}s
	+	\underset{0\leq s}{\sum} {\rm e}^ {-\lambda_1 s}\bra{\mbf{\phi}}{\mbf{I}_s^{\rm d}}	
	+	\underset{0\leq s}{\sum} {\rm e}^ {-\lambda_1 s}\bra{\mbf{\phi}}{\mbf{I}_s^{\rm j}}.
	\end{align*}
	Since $\lambda_1>0$, the first integral is finite.
We need to prove that the other sums are finite a.s.
	In order to do it, we will decompose the sum in small jumps: $\{(s,\mbf{I}_s^{\rm d}):\bra{\mbf{1}}{\mbf{I}_s^{\rm d}}\leq  {\rm e}^ {\lambda_1 s}\}$ and big jumps: $\{(s,\mbf{I}_s^{\rm d}):\bra{\mbf{1}}{\mbf{I}_s^{\rm d}}>  {\rm e}^ {\lambda_1 s}\}$,
and we handle $\mbf{I}_s^{\rm j}$ in similar way.
Denote
$M_{\mbf{\phi}}:=\max\{\phi(i):i\in E\}$. For the small jumps, by the definition of $\nu$,
	\begin{align*}
&{\rm E}_{\mbf{x}}
\left[\underset{0\leq s}{\sum} {\rm e}^ {-\lambda_1 s}\bra{\mbf{\phi}}{\mbf{I}_s^{\rm d}}	\Indi{\bra{\mbf{1}}{\mbf{I}_s^{\rm d}}\leq  {\rm e}^ {\lambda_1 s}}
		+	\underset{0\leq s}{\sum} {\rm e}^ {-\lambda_1 s}\bra{\mbf{\phi}}{\mbf{I}_s^{\rm j}}\Indi{\bra{\mbf{1}}{\mbf{I}_s^{\rm j}}\leq  {\rm e}^ {\lambda_1 s}}\right]\\
=&	
{\rm E}_{\mbf{x}}
\left[	\int_0^\infty\hspace{-.2cm}\int_{\bra{\mbf{1}}{\mbf{z}}\leq  {\rm e}^ {\lambda_1 s}}\hspace{-.2cm}  {\rm e}^ {-\lambda_1 s}{z}(\eta_s)\bra{\mbf{\phi}}{\mbf{z}}\mu_{\eta_{s}}(\ud \mbf{z})\ud s
	+\int_0^\infty\hspace{-.2cm}\int_{\bra{\mbf{1}}{\mbf{z}}\leq  {\rm e}^ {\lambda_1 s}}\hspace{-.2cm}  {\rm e}^ {-\lambda_1 s}\bra{\mbf{\phi}}{\mbf{z}}\nu_{\eta_{s-},\eta_s}(\ud \mbf{z})\ud s\right]\\
\leq &
CM_{\mbf{\phi}}\underset{i\in E}{\sum}\int_0^\infty\int_{\bra{\mbf{1}}{\mbf{z}}\leq  {\rm e}^ {\lambda_1 s}} {\rm e}^ {-\lambda_1 s}\bra{\mbf{1}}{\mbf{z}}^2\mu_{i}(\ud \mbf{z})\ud s,
	\end{align*}
	where $C$ is a positive constant.
For each $i\in E$,
	\begin{equation*}
	\begin{split}
	\int_0^\infty\int_{\bra{\mbf{1}}{\mbf{z}}\leq  {\rm e}^ {\lambda_1 s}} {\rm e}^ {-\lambda_1 s}\bra{\mbf{1}}{\mbf{z}}^2\mu_{i}(\ud \mbf{z})\ud s\leq
	\int_{\bra{\mbf{1}}{\mbf{z}}\leq 1}\bra{\mbf{1}}{\mbf{z}}^2\mu_i(\ud \mbf{z})+\int_{\bra{\mbf{1}}{\mbf{z}}> 1}\bra{\mbf{1}}{\mbf{z}}\mu_i(\ud \mbf{z})
	<\infty.
	\end{split}
	\end{equation*}
	Therefore,
$$	\underset{0\leq s}{\sum} {\rm e}^ {-\lambda_1 s}\bra{\mbf{\phi}}{\mbf{I}_s^{\rm d}}	\Indi{\bra{\mbf{1}}{\mbf{I}_s^{\rm d}}\leq  {\rm e}^ {\lambda_1 s}}
	+	\underset{0\leq s}{\sum} {\rm e}^ {-\lambda_1 s}\bra{\mbf{\phi}}{\mbf{I}_s^{\rm j}}\Indi{\bra{\mbf{1}}{\mbf{I}_s^{\rm j}}\leq  {\rm e}^ {\lambda_1 s}}<\infty, \qquad
{\rm P}_{\mbf{x}}\mbox{ a.s.}
$$

	For the big jumps, using Fubini's Theorem, we get
	\begin{equation*}
	\begin{split}
	&{\rm E}_{\mbf{x}}
\left[\underset{0\leq s}{\sum}	\Indi{\bra{\mbf{1}}{\mbf{I}_s^{\rm d}}>  {\rm e}^ {\lambda_1 s}}
	+	\underset{0\leq s}{\sum}\Indi{\bra{\mbf{1}}{\mbf{I}_s^{\rm j}}>  {\rm e}^ {\lambda_1 s}}\right]
\\
=&
    {\rm E}_{\mbf{x}}
\left[	\int_0^\infty\hspace{-.2cm}\int_{\bra{\mbf{1}}{\mbf{z}}>  {\rm e}^ {\lambda_1 s}}\hspace{-.2cm} {z}(\eta_s)\mu_{\eta_{s}}(\ud \mbf{z})\ud s
	+\int_0^\infty\hspace{-.2cm}\int_{\bra{\mbf{1}}{\mbf{z}}>  {\rm e}^ {\lambda_1 s}}\hspace{-.2cm} \nu_{\eta_{s-},\eta_s}(\ud \mbf{z})\ud s\right]\\
\leq &
C \underset{i\in E}{\sum}\int_0^\infty\int_{\bra{\mbf{1}}{\mbf{z}}>  {\rm e}^ {\lambda_1 s}}\bra{\mbf{1}}{\mbf{z}}\mu_{i}(\ud \mbf{z})\ud s\\
=&
\frac{C}{\lambda_1} \underset{i\in E}{\sum}\int_{\bra{\mbf{1}}{\mbf{z}}> 1}\bra{\mbf{1}}{\mbf{z}}\ln(\bra{\mbf{1}}{\mbf{z}})\mu_i(\ud \mbf{z})<\infty.
	\end{split}
	\end{equation*}
This implies that ${\rm P}_{\mbf{x}}$ a.s. we have finitely many big jumps and therefore
	$$	\underset{0\leq s}{\sum} {\rm e}^ {-\lambda_1 s}\bra{\mbf{\phi}}{\mbf{I}_s^{\rm d}}	\Indi{\bra{\mbf{1}}{\mbf{I}_s^{\rm d}}>  {\rm e}^ {\lambda_1 s}}
	+	\underset{0\leq s}{\sum} {\rm e}^ {-\lambda_1 s}\bra{\mbf{\phi}}{\mbf{I}_s^{\rm j}}\Indi{\bra{\mbf{1}}{\mbf{I}_s^{\rm j}}>  {\rm e}^ {\lambda_1 s}}<\infty, \qquad
{\rm P}_{\mbf{x}}\mbox{ a.s.}
	$$
	So, $\liminf_{t\to\infty}{\rm E}_{\mbf{x}}[Z_t| \mathcal{S}]<\infty$, ${\rm P}_{\mbf{x}}$ a.s.

	 Now, we will prove the second part of the Theorem. Since
	$$\widetilde{\p}_{\mbf{x}}(\underset{t\rightarrow\infty}{\limsup}\ W_t=\infty)={\rm P}_{\mbf{x}}(\underset{t\rightarrow\infty}{\limsup}\ Z_t=\infty),$$
     if we prove that
	\begin{equation}\label{limsup}
	\underset{t\rightarrow\infty}{\limsup}\ Z_t=\infty \qquad  {\rm P}_{\mbf{x}}\mbox{ a.s.}
	\end{equation}
	then $\widetilde{\p}_{\mbf{x}}$ and ${\p}_{\mbf{x}}$ are singular and hence
	$$\p_{\mbf{x}}(\underset{t\rightarrow\infty}{\limsup}\ W_t=0)=1.$$
	It remains to show \eqref{limsup}. Suppose that for a fixed $i\in E$,
	$$\int_{\bra{\mbf{1}}{\mbf{z}}>1}\bra{\mbf{1}}{\mbf{z}}\ln(\bra{\mbf{1}}{\mbf{z}})\mu_{i}(\ud \mbf{z})=\infty.$$
	We will divide the proof in two parts.
	
	\bigskip

	(i)		First assume that $$\int_{\bra{\mbf{1}}{\mbf{z}}>1}z(i)\ln(\bra{\mbf{1}}{\mbf{z}})\mu_{i}(\ud \mbf{z})=\infty.$$
		
		Denote by $\mathcal{T}$ the set of times at which we immigrate $(s,\mbf{X}^{d,s}_{\cdot})$  along the spine, then for $s\in \mathcal{T}$,
		$$Z_s\geq  {\rm e}^ {-\lambda_1 s} \bra{\mbf{\phi}}{\mbf{X}^{d,s}_0}.$$

To prove \eqref{limsup}, we only need to prove that
$$
\underset{\mathcal{T}\ni s\rightarrow\infty}{\limsup}\  {\rm e}^ {-\lambda_1 s} \bra{\mbf{\phi}}{\mbf{X}^{d,s}_0}=\infty.
$$
Since $ {\rm e}^ {-\lambda_1 s} \bra{\mbf{\phi}}{\mbf{X}^{d,s}_0}\ge m_{\mbf{\phi}} {\rm e}^ {-\lambda_1 s} \bra{\mbf{1}}{\mbf{X}^{d,s}_0}$, where $m_{\mbf{\phi}}:=\min\{\phi(i): i\in E\}$, we only need to prove that
\begin{equation}\label{limsup-infty}
\underset{\mathcal{T}\ni s\rightarrow\infty}{\limsup}\  {\rm e}^ {-\lambda_1 s} \bra{\mbf{1}}{\mbf{X}^{d,s}_0}=\infty.
\end{equation}

For $ T,K>1$, define the subsets
		$$A_{T,K}:=\sharp \{s\in\mathcal{T}\cap (T,\infty): \bra{\mbf{1}}{\mbf{X}^{d,s}_0} >K {\rm e}^ {\lambda_1 s}\}.$$
Then,  given the Markov chain $(\eta, \mbf{P}^{\mbf{\phi}}_{\mbf{\phi}\mbf{x}})$, $A_{T,K}$ is a Poisson random variable with parameter
$\int_T^\infty \int_{\bra{\mbf{1}}{\mbf{z}} >K {\rm e}^ {\lambda_1 s}} {z}(\eta_s)\mu_{\eta_{s}}(\ud \mbf{z}){\rm d}s$. To prove \eqref{limsup-infty}, we only need to prove that
$$\int_T^\infty {\rm d}s\int_{\bra{\mbf{1}}{\mbf{z}} >K {\rm e}^ {\lambda_1 s}} {z}(\eta_s)\mu_{\eta_{s}}(\ud \mbf{z})=\infty,\qquad  \mbf{P}^{\mbf{\phi}}_{\mbf{\phi}\mbf{x}}\text{ a.s.}$$ for all $T,K>1$.
		Observe that $\eta$ is ergodic. Then  there exists $C_i>0$ and random $S$ such that
		$$\int_0^t \Indi{\eta_s=i}\ud s\geq C_it \qquad\mbox{ for all } t\geq S\qquad  \mbf{P}^{\mbf{\phi}}_{\mbf{\phi}\mbf{x}}\text{ a.s.}$$
		Let us denote by $R=\max\{K {\rm e}^ {\lambda_1 T}, K {\rm e}^ {\lambda_1 S}\}$. Then, by Fubini's Theorem and the previous inequality,
		\begin{equation*}
		\begin{split}
		\int_T^\infty \int_{\bra{\mbf{1}}{\mbf{z}} >K {\rm e}^ {\lambda_1 s}} {z}(\eta_s)\mu_{\eta_{s}}(\ud \mbf{z})\ud s
        &\geq\int_T^\infty\int_{\bra{\mbf{1}}{\mbf{z}} >K {\rm e}^ {\lambda_1 s}} {z}(\eta_s)\Indi{\eta_{s}=i}\mu_{\eta_{s}}(\ud \mbf{z})\ud s\\
		&\geq\frac{C_i}{\lambda_1}\int_{\bra{\mbf{1}}{\mbf{z}} >R}{z}(i)\ln(\bra{\mbf{1}}{\mbf{z}})\mu_{i}(\ud \mbf{z})-D=\infty,
		\end{split}
		\end{equation*}
		where
		 $		 D=(\lambda_1^{-1}\ln(K)+T)
		 \int_{\bra{\mbf{1}}{\mbf{z}} >R}{z}(i)\mu_{i}(\ud \mbf{z})<\infty$.
Therefore we have \eqref{limsup}.
\bigskip

(ii)		Next suppose that, for  $j\neq i$,
		$$\int_{\bra{\mbf{1}}{\mbf{z}}>1}z(j)\ln(\bra{\mbf{1}}{\mbf{z}})\mu_{i}(\ud \mbf{z})=\infty.$$
		By inequality \eqref{inequality B} and the ergodicity of $\eta$, the set
		$
		\tau:=
		\{s\geq 0: \eta_{s-}=i, \eta_{s}=j\}$ is not bounded. For all $s\in \mathcal{T}$
		$$Z_s\geq  {\rm e}^ {-\lambda_1 s} \bra{\mbf{\phi}}{\mbf{X}^{j,s}_0}$$
		Let us denote by $\tau_1,\tau_2,\cdots$ a enumeration of this times in increasing order.
By applying the distribution of $\mbf{X}^{j,\cdot}$ in the jump times $\tau_n$ we have that for all $K>0$,
			\begin{equation*}
		\begin{split}
		\underset{n\geq 1}{\sum}{\rm P}_{\mbf{x}}( {\rm e}^ {-\lambda_1 \tau_n}\bra{\mbf{1}}{\mbf{X}^{j,\tau_n}_0}\geq K)&= 	\underset{n\geq 1}{\sum}{\rm E}_{\mbf{x}}[{\rm P}_{\mbf{x}}( {\rm e}^ {-\lambda_1 \tau_n}\bra{\mbf{1}}{\mbf{X}^{j,\tau_n}_0}\geq K\mid \tau_n)]\\
		&=	\frac{1}{B_{ij}^T}\underset{n\geq 1}{\sum} {\rm E}_{\mbf{x}}\left[\int_{\R_+^d}\Indi{ {\rm e}^ {-\lambda_1 \tau_n}\bra{\mbf{1}}{\mbf{z}}\geq K}z(j)\mu_i(\ud \mbf{z})\right]\\
		&=	\frac{1}{B_{ij}^T}{\rm E}_{\mbf{x}}\left[\underset{n\geq 1}{\sum} \int_{\R_+^d}\Indi{ {\rm e}^ {-\lambda_1 \tau_n}\bra{\mbf{1}}{\mbf{z}}\geq K}z(j)\mu_i(\ud \mbf{z})\right]
		\end{split}
\end{equation*}
By renewal theory, there exist $A\in(0,\infty)$  and a subset $\Omega_1$ with ${\rm P}_{\mbf{x}}(\Omega_1)=1$ such that for all $\omega\in \Omega_1$ there exists $N=N(\omega)>0$ such that
		$$\tau_n\leq A n, \qquad \mbox{ for all } \ n\geq N.$$
	Then,
		$$\underset{n\geq 1}{\sum}\int_{\R_+^d}\Indi{ {\rm e}^ {-\lambda_1 \tau_n}\bra{\mbf{1}}{\mbf{z}}\geq K}z(j)\mu_i(\ud \mbf{z})\\
		\geq \underset{n\geq N}{\sum}\int_{\R_+^d}\Indi{ {\rm e}^ {-\lambda_1 An}\bra{\mbf{1}}{\mbf{z}}\geq K}z(j)\mu_i(\ud \mbf{z}).$$
By the integral test criterion for series, the previous series is divergent since
			\begin{equation*}
		\begin{split}
\int_{N}^{\infty}\int_{\R_+^d}\Indi{ {\rm e}^ {-\lambda_1 As}\bra{\mbf{1}}{\mbf{z}}\geq K}z(j)\mu_i(\ud \mbf{z})\ud s
		&=\frac{1}{A\lambda_1}\int_{\bra{\mbf{1}}{\mbf{z}} >K}{z}(j)\ln(\bra{\mbf{1}}{\mbf{z}})\mu_{i}(\ud \mbf{z})-D=\infty,
		\end{split}
		\end{equation*}
		where
$D=((\lambda_1A)^{-1}\ln(K)+N)\int_{\bra{\mbf{1}}{\mbf{z}} >K}\mbf{z}(i)\mu_{i}(\ud \mbf{z})<\infty$. This implies
		$$	\underset{n\geq 1}{\sum}{\rm P}_{\mbf{x}}( {\rm e}^ {-\lambda_1 \tau_n}\bra{\mbf{1}}{\mbf{X}^{j,\tau_n}_0}\geq K)={\rm E}_{\mbf{x}}\left[\underset{n\geq 1}{\sum} \int_{\R_+^d}\Indi{ {\rm e}^ {-\lambda_1 \tau_n}\bra{\mbf{1}}{\mbf{z}}\geq K}z(j)\mu_i(\ud \mbf{z})\right]=\infty.$$
	Therefore, by the Borel Cantelli Lemma,
$$\underset{n\rightarrow \infty}{\limsup}\  {\rm e}^ {-\lambda _1 \tau_n}\bra{\mbf{1}}{\mbf{X}^{j,\tau_n}_0}\geq K,\qquad {\rm P}_{\mbf{x}}\mbox{ a.s.}$$
for any $K>0$, and then
	$$\underset{n\rightarrow \infty}{\limsup}\  {\rm e}^ {-\lambda _1 \tau_n}\bra{\mbf{1}}{\mbf{X}^{j,\tau_n}_0}=\infty.$$
	Hence we have
	$$\underset{s\rightarrow\infty}{\limsup}\ Z_s\geq\underset{n\rightarrow \infty}{\limsup}\  {\rm e}^ {-\lambda _1 \tau_n}\bra{\mbf{\phi}}{\mbf{X}^{j,\tau_n}_0}
	=\infty,$$
	which  says that \eqref{limsup} holds.\hfill$\square$

\section{Proof of Theorem \ref{tstrong1}}\label{section theorem 2}
Suppose that \eqref{xlogx} doesn't hold, then
$$ \underset{t\rightarrow \infty}{\limsup}\  {\rm e}^ {-\lambda_1 t} \mbf{X}_t(i)\leq \underset{t\rightarrow \infty}{\lim}\frac{1}{\phi(i)} {\rm e}^ {-\lambda_1 t} \bra{\mbf{\phi}}{\mbf{X}_t}=0 \qquad
{\P}_{\mbf{x}}\mbox{ a.s.}
$$
We therefore focus on the case when \eqref{xlogx} holds. In order to do this, we will separate small jumps from big jumps in the Poisson measures. More precisely, for each $i\in E$, let us define the Poisson random measures
$$N_i^{(1)}(\ud s,\ud \mbf{z}, \ud r):=\Indi{\bra{\mbf{1}}{\mbf{z}}\leq  {\rm e}^ {\lambda_1 s}}N_i(\ud s,\ud \mbf{z}, \ud r)$$
and
$$N_i^{(2)}(\ud s,\ud \mbf{z}, \ud r):=\Indi{\bra{\mbf{1}}{\mbf{z}}>  {\rm e}^ {\lambda_1 s}}N_i(\ud s,\ud \mbf{z}, \ud r)$$
and denote by $\widetilde{N}_i^{(1)}$ and $\widetilde{N}_i^{(2)}$ their compensated versions, respectively.

\noindent We are going to compute the proof of Theorem \ref{tstrong1} in three steps. First, in lattice times, we will approximate the value of the limit by the value of the limit of a conditional expectation. With this relation, we are going to find our limit in lattice times. And finally in the third step, we will extend the result to continuous times.

\subsection{Proof for lattice times}
First, we will prove Theorem \ref{tstrong1} in lattice times.
For each $\delta>0$, consider the lattice times $n\delta$, $n\in \N$.
 We will approximate the value of the limit by the value of the limit of a conditional expectation.

\begin{lemma}
	\label{lemma conditional} If \eqref{xlogx} holds, then for any $m\in \N$, $\sigma>0$ and
$\mbf{x}\in \R^d_+,$
	\begin{equation*}
	\lim_{n\rightarrow \infty} {\rm e}^ {-\lambda_1(n+m)\sigma}\mbf{X}_{(n+m)\sigma}-\Expconx{\mbf{x}}{ {\rm e}^ {-\lambda_1(n+m)\sigma}\mbf{X}_{(n+m)\sigma}}{\mathcal{F}_{n\sigma}}=0, \quad \mbox{ in } \mathbb{L}^1(\p_{\mbf{x}}) \mbox{ and } \p_{\mbf{x}}\mbox{ a.s.}
	\end{equation*}
\end{lemma}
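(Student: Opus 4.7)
My plan is to realize $Y_n := \mathrm{e}^{-\lambda_1(n+m)\sigma}\mbf{X}_{(n+m)\sigma} - \e_{\mbf{x}}[\mathrm{e}^{-\lambda_1(n+m)\sigma}\mbf{X}_{(n+m)\sigma}\mid\F_{n\sigma}]$ as an explicit sum of stochastic integrals over the window $[n\sigma,(n+m)\sigma]$ via \eqref{useful equation}. Applying that identity with $\mbf{x}=\mbf{e}_i$, $s=n\sigma$, $t=(n+m)\sigma$ writes $X_{(n+m)\sigma}(i)$ as $\bra{\mbf{M}(m\sigma)\mbf{e}_i}{\mbf{X}_{n\sigma}}$ plus a Brownian integral and a compensated Poisson integral on the window. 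Taking $\F_{n\sigma}$-conditional expectation kills the martingale terms, so $Y_n(i)$ equals $\mathrm{e}^{-\lambda_1(n+m)\sigma}$ times those martingale terms, and the task reduces to showing this rescaled martingale difference vanishes a.s.\ and in $\mathbb{L}^1$.

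I would then split each $\widetilde N_j$ into $\widetilde N_j^{(1)}+\widetilde N_j^{(2)}$ (small versus big jumps, as in the definitions preceding the statement), producing a decomposition $Y_n=Y_n^{\rm sm}+Y_n^{\rm big}$ in which $Y_n^{\rm sm}$ bundles the Brownian contribution with the compensated small jumps. For $Y_n^{\rm sm}$ second moments exist, and I would bound $\e_{\mbf{x}}[Y_n^{\rm sm}(i)^2]$ by It\^o's isometry and Campbell's formula, using $\|\mbf{M}(s)\|\leq c_3\mathrm{e}^{\lambda_1 s}$ from \eqref{limit M barczy} together with $\e_{\mbf{x}}[X_{u,j}]\leq C\mathrm{e}^{\lambda_1 u}$ from \eqref{eB}. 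After absorbing the $\mathrm{e}^{-2\lambda_1(n+m)\sigma}$ prefactor and summing in $n$, the Brownian part is geometrically summable, while the small-jump part reduces, via Fubini, to a constant multiple of $\sum_{j\in E}\int(\bra{\mbf{1}}{\mbf{z}}\wedge\bra{\mbf{1}}{\mbf{z}}^2)\mu_j(\ud\mbf{z})$, finite by the integrability hypothesis on $\mu_j$. Hence $\sum_n\e_{\mbf{x}}[\|Y_n^{\rm sm}\|^2]<\infty$, and Chebyshev combined with Borel--Cantelli yield $Y_n^{\rm sm}\to 0$ a.s.\ and in $\mathbb{L}^2\subset\mathbb{L}^1$.

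For $Y_n^{\rm big}$ second moments against the large jumps may diverge, so I would work at the level of first moments. Writing $Y_n^{\rm big}$ as the difference of the uncompensated positive large-jump sum $J_n$ and its compensator $A_n$ (both non-negative with $\e J_n=\e A_n$), Campbell's formula combined with the same two bounds gives $\e_{\mbf{x}}[\|Y_n^{\rm big}\|]\leq C'\sum_{j\in E}\int_{n\sigma}^{(n+m)\sigma}\int_{\bra{\mbf{1}}{\mbf{z}}>\mathrm{e}^{\lambda_1 u}}\bra{\mbf{1}}{\mbf{z}}\mu_j(\ud\mbf{z})\ud u$, where the $\mathrm{e}^{-\lambda_1(n+m)\sigma}$ prefactor exactly cancels one factor of $\mathrm{e}^{\lambda_1\cdot}$. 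Summing in $n$ and applying Fubini converts this into a constant multiple of $\sum_{j\in E}\int_{\bra{\mbf{1}}{\mbf{z}}>1}\bra{\mbf{1}}{\mbf{z}}\ln\bra{\mbf{1}}{\mbf{z}}\mu_j(\ud\mbf{z})$, which is finite precisely by \eqref{xlogx}. Therefore $\sum_n\e_{\mbf{x}}[\|Y_n^{\rm big}\|]<\infty$, giving $Y_n^{\rm big}\to 0$ a.s.\ and in $\mathbb{L}^1$, which together with the previous step finishes the proof.

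The main obstacle I anticipate is the exponential bookkeeping in the big-jump step: the truncation threshold $\mathrm{e}^{\lambda_1 u}$ in the definition of $N_j^{(2)}$ is tailored so that the $u$-integration produces precisely the $\ln\bra{\mbf{1}}{\mbf{z}}$ factor needed to invoke \eqref{xlogx}, and getting each power of $\mathrm{e}^{\lambda_1\cdot}$ onto the correct side of the inequality takes some care. A secondary technical point is to justify the uniform bound $\e_{\mbf{x}}[X_{u,j}]\leq C\mathrm{e}^{\lambda_1 u}$, but this is immediate from \eqref{eB} and \eqref{limit M barczy}.
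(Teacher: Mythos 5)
Your proposal is correct and follows essentially the same route as the paper: decompose the difference via \eqref{useful equation} into a Brownian term plus compensated small-jump and big-jump Poisson integrals, control the first two in $\mathbb{L}^2$ and the big jumps in $\mathbb{L}^1$, with the threshold ${\rm e}^{\lambda_1 u}$ converting the $u$-integration into exactly the $\ln\bra{\mbf{1}}{\mbf{z}}$ weight of \eqref{xlogx}. The only (harmless) deviations are that you keep the integration window $[n\sigma,(n+m)\sigma]$ when summing over $n$ — so your small-jump bound needs only the standing integrability of $\mu_i$ rather than \eqref{xlogx} — and that you obtain summability of the big-jump first moments and invoke Borel--Cantelli, where the paper instead shows the expectations tend to zero and gets a.s.\ convergence from monotonicity of the dominating tail integral.
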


\begin{proof}
The result is true if we prove that for all $k\in E$,
\begin{equation*}
	\lim_{n\rightarrow \infty} {\rm e}^ {-\lambda_1(n+m)\sigma}\bra{\mbf{e}_k}{\mbf{X}_{(n+m)\sigma}}-\Expconx{\mbf{x}}{ {\rm e}^ {-\lambda_1(n+m)\sigma}\bra{\mbf{e}_k}{\mbf{X}_{(n+m)\sigma}}}{\mathcal{F}_{n\sigma}}=0, \quad \mbox{ in } \mathbb{L}^1(\p_{\mbf{x}}) \mbox{ and } \p_{\mbf{x}}\mbox{ a.s.}
\end{equation*}
Let $s,t\geq 0$. By the Markov property we have
$$ {\rm e}^ {-\lambda_1(t+s)}\bra{\mbf{e}_k}{\mbf{X}_{t+s}}-\Expconx{\mbf{x}}{ {\rm e}^ {-\lambda_1(t+s)}\bra{\mbf{e}_k}{\mbf{X}_{t+s}}}{\mathcal{F}_{t}}= {\rm e}^ {-\lambda_1(t+s)}\bra{\mbf{e}_k}{\mbf{X}_{t+s}}- {\rm e}^ {-\lambda_1(t+s)}\bra{\mbf{M}(s)\mbf{e}_k}{\mbf{X}_{t}}.$$
Now, applying equation \eqref{useful equation} to the times $t$ and $t+s$, we obtain
\begin{align}
&
{\rm e}^ {-\lambda_1(t+s)}\bra{\mbf{e}_k}{\mbf{X}_{t+s}}- {\rm e}^ {-\lambda_1(t+s)}\bra{\mbf{M}(s)\mbf{e}_k}{\mbf{X}_{t}}\notag\\
=&
\underset{i\in E}{\sum}  {\rm e}^ {-\lambda_1(t+s)}\int_t^{t+s}[\mbf{M}(t+s-u)\mbf{e}_k]_i\sqrt{2c_iX_{u,i}}\ud W_{u,i}\notag\\
&+\underset{i\in E}{\sum} {\rm e}^ {-\lambda_1(t+s)}\int_t^{t+s}\int_{\R_+^d}\int_0^{\infty} \bra{\mbf{M}(t+s-u)\mbf{e}_k}{\mbf{z}}\Indi{l\leq X_{u-,i}}\widetilde{N}_i(\ud u,\ud \mbf{z}, \ud l)\notag\\
=&
C_{t,t+s}(\mbf{e}_k)+S_{t,t+s}(\mbf{e}_k)+B_{t,t+s}(\mbf{e}_k),
\label{1-3}
\end{align}
where
\begin{equation*}\begin{split}
C_{t,t+s}(\mbf{e}_k)&:=\underset{i\in E}{\sum}  {\rm e}^ {-\lambda_1(t+s)}\int_t^{t+s}[\mbf{ M}(t+s-u)\mbf{e}_k]_i\sqrt{2c_iX_{u,i}}\ud W_{u,i},\label{Ct,t+s} \\
S_{t,t+s}(\mbf{e}_k)&:=\underset{i\in E}{\sum} {\rm e}^ {-\lambda_1(t+s)}\int_t^{t+s}\int_{\R_+^d}\int_0^{\infty} \bra{\mbf{M}(t+s-u)\mbf{e}_k}{\mbf{z}}\Indi{r\leq X_{u-,i}}\widetilde{N}_i^{(1)}(\ud u,\ud \mbf{z}, \ud r),\\
B_{t,t+s}(\mbf{e}_k)&:=\underset{i\in E}{\sum} {\rm e}^ {-\lambda_1(t+s)}\int_t^{t+s}\int_{\R_+^d}\int_0^{\infty} \bra{\mbf{M}(t+s-u)\mbf{e}_k}{\mbf{z}}\Indi{r\leq X_{u-,i}}\widetilde{N}^{(2)}_i(\ud u,\ud \mbf{z}, \ud r).
\end{split}
\end{equation*}
To complete the proof, we need to control the convergence of the above three terms.

\bigskip

\noindent\underline{\it (i) Lattice convergence of $C_{t,t+s}(\mbf{e}_k)$}:
We will show that  for any $k\in E$, $m\in \N$, $\sigma>0$ and $\mbf{x}\in\R^d_+$
\begin{equation*}
\lim_{n\rightarrow \infty}C_{ n\sigma,(n+m)\sigma}(\mbf{e}_k)=0, \qquad \mbox{ in } \mathbb{L}^2(\p_{\mbf{x}}) \mbox { and } \p_{\mbf{x}}\mbox{ a.s.}
\end{equation*}

	First note that  for $t\in [n\sigma,(n+m)\sigma]$, the process
	\begin{equation}\label{c n,m}
	C_t^{(n,m,\sigma)}:=\underset{i\in E}{\sum}  {\rm e}^ {-\lambda_1(n+m)\sigma}\int_{n\sigma}^{t}[\mbf{M}((n+m)\sigma-u)\mbf{e}_k]_i\sqrt{2c_iX_{u,i}}\ud W_{u,i}
		\end{equation}
	 is a continuous local martingale with quadratic variation given by
	$$ \underset{i\in E}{\sum}  {\rm e}^ {-2\lambda_1(n+m)\sigma}\int_{n\sigma}^{t}([\mbf{M}((n+m)\sigma-u)\mbf{e}_k]_i)^2{2c_iX_{u,i}}\ud u.$$
	Then, by taking $t=(n+m)\sigma$, we have
\begin{equation*}\begin{split}
\Expx{\mbf{x}}{\Big(C_{n\sigma,(n+m)\sigma}(\mbf{e}_k)\Big)^2}&=  {\rm e}^ {-2\lambda_1(n+m)\sigma}\Expx{\mbf{x}}{\underset{i\in E}{\sum}\int_{n\sigma}^{(n+m)\sigma}([\mbf{M}((n+m)\sigma-u)\mbf{e}_k]_i)^2{2c_iX_{u,i}}\ud u }.
\end{split}\end{equation*}
Denote by $C=\max\{c_i:i\in E\}$.
Observe that $(\mbf{M}(t)\mbf{e}_k)_i=M(t)_{i,k}$ and by equation \eqref{limit M barczy} exists $C_1>0$ such that $\|\mbf{M}(t)\|\leq C_1 {\rm e}^ {\lambda_1 t}$. Recall that  $m_{\mbf{\phi}}=\min\{\phi(i): i\in E\}$. Then
\begin{equation*}\begin{split}
\Expx{\mbf{x}}{\Big(C_{n\sigma,(n+m)\sigma}(\mbf{e}_k)\Big)^2}&\leq\frac{2CC_1^2}{m_{\mbf{\phi}}}  {\rm e}^ {-2\lambda_1(n+m)\sigma}\Expx{\mbf{x}}{\int_{n\sigma}^{(n+m)\sigma} {\rm e}^ {2\lambda_1[(n+m)\sigma-u]}{\bra{\mbf{\phi}}{\mbf{X}_u}\ud u} }\\
&=\frac{2CC_1^2 \bra{\mbf{\phi}}{\mbf{x}}}{\lambda_1m_{\mbf{\phi}}} [ {\rm e}^ {-\lambda_1n\sigma}- {\rm e}^ {-\lambda_1(n+m)\sigma}]\\
&=\frac{2CC_1^2  \bra{\mbf{\phi}}{\mbf{x}}}{\lambda_1m_{\mbf{\phi}}} [1- {\rm e}^ {-\lambda_1 m\sigma}] {\rm e}^ {-\lambda_1n\sigma},
\end{split}
\end{equation*}
where in the first equality we used the fact that $W_u= {\rm e}^ {-\lambda_1 u}\bra{\mbf{\phi}}{\mbf{X}_u}$ is a martingale. Therefore
\begin{equation}
\label{borel c}
\underset{n=1}{\overset{\infty}{\sum}}\Expx{\mbf{x}}{\Big(C_{n\sigma,(n+m)\sigma}(\mbf{e}_k)\Big)^2}<\infty.
\end{equation}
Then we have the $\mathbb{L}^2(\p_{\mbf{x}})$-convergence. The $\p_{\mbf{x}}$ a.s. convergence follows from Chebyshev's inequality,  Borel-Cantelli Lemma and the previous inequality.

\bigskip

\noindent\underline{\it (ii) Lattice convergence of $S_{t,t+s}(\mbf{e}_k)$}: We will show that, 	if \eqref{xlogx} holds, then for any $k\in E$, $m\in \N$, $\sigma>0$ and $\mbf{x}\in\R^d_+$,
	\begin{equation*}
	\lim_{n\rightarrow \infty}S_{ n\sigma,(n+m)\sigma}(\mbf{e}_k)=0, \qquad \mbox{ in } \mathbb{L}^2(\p_{\mbf{x}}) \mbox{ and } \p_{\mbf{x}}\mbox{ a.s.}
	\end{equation*}

Similar to the proof in (i) above, for $t\in [n\sigma,(n+m)\sigma]$, the process,
\begin{equation}
\label{s n,m}
S_t^{(n,m,\sigma)}:= {\rm e}^ {-\lambda_1(n+m)\sigma}\underset{i\in E}{\sum} \int_{n\sigma}^{t}\int_{\R_+^d}\int_0^{\infty} \bra{\mbf{M}((n+m)\sigma-u)\mbf{e}_k}{\mbf{z}}\Indi{r\leq X_{u-,i}}\widetilde{N}_i^{(1)}(\ud u,\ud \mbf{z}, \ud r)	
\end{equation}
 is a martingale with quadratic variation given by
	$$
 {\rm e}^ {-2\lambda_1(n+m)
\sigma}\underset{i\in E}{\sum} \int_{n\sigma}^{t}\int_{\R_+^d}\int_0^{\infty} \bra{\mbf{M}((n+m)\sigma-u)\mbf{e}_k}{\mbf{z}}^2\Indi{r\leq X_{u,i}}\Indi{\bra{\mbf{1}}{\mbf{z}}\leq  {\rm e}^ {\lambda_1 u}}\ud r\mu_i( \ud\mbf{z}) \ud u.$$
	Then, by taking $t=(n+m)\sigma$, we have
\begin{equation*}\begin{split}
&\Expx{\mbf{x}}{\Big(S_{n\sigma,(n+m)\sigma}(\mbf{e}_k)\Big)^2}\\
&=  {\rm e}^ {-2\lambda_1(n+m)\sigma} \underset{i\in E}{\sum} \Expx{\mbf{x}}{\int_{n\sigma}^{(n+m)\sigma}\int_{\R_+^d}\bra{\mbf{M}((n+m)\sigma-u)\mbf{e}_k}{\mbf{z}}^2 X_{u,i}\Indi{\bra{\mbf{1}}{\mbf{z}}\leq  {\rm e}^ {\lambda_1 u}}\mu_i( \ud\mbf{z}) \ud u}.
\end{split}\end{equation*}
By equation \eqref{limit M barczy}, there exists $C>0$ such that for all $t\geq 0$,
\begin{equation}
\label{equation Mtz}
\bra{\mbf{M}(t)\mbf{e}_k}{\mbf{z}}\leq C {\rm e}^ {\lambda_1 t}\bra{\mbf{1}}{\mbf{z}}.
\end{equation}
 Therefore, by using the definition of $m_{\mbf{\phi}}$
\begin{align*}
&\Expx{\mbf{x}}{\Big(S_{n\sigma,(n+m)\sigma}(\mbf{e}_k)\Big)^2}\\
&\leq \frac{C^2}{m_{\mbf{\phi}}} \underset{i\in E}{\sum} \Expx{\mbf{x}}{\int_{n\sigma}^{(n+m)\sigma}\int_{\R_+^d} {\rm e}^ {-2\lambda_1 u}\bra{\mbf{1}}{\mbf{z}}^2 \bra{\mbf{\phi}}{\mbf{X}_{u}}\Indi{\bra{\mbf{1}}{\mbf{z}}\leq  {\rm e}^ {\lambda_1 u}}\mu_i( \ud\mbf{z}) \ud u}\\
&= \frac{C^2 \bra{\mbf{\phi}}{\mbf{x}}}{m_{\mbf{\phi}}} \underset{i\in E}{\sum} {\int_{n\sigma}^{(n+m)\sigma}\int_{\R_+^d} {\rm e}^ {-\lambda_1 u}\bra{\mbf{1}}{\mbf{z}}^2 \Indi{\bra{\mbf{1}}{\mbf{z}}\leq  {\rm e}^ {\lambda_1 u}}\mu_i( \ud\mbf{z}) \ud u},
\end{align*}
where in the first equality we used the fact that $W_u= {\rm e}^ {-\lambda_1 u}\bra{\mbf{\phi}}{\mbf{X}_u}$ is a martingale. Taking sum over $n$, we get
\begin{align*}
 \underset{n=1}{\overset{\infty}{\sum}}\Expx{\mbf{x}}{\Big(S_{n\sigma,(n+m)\sigma}(\mbf{e}_k)\Big)^2} \leq\frac{C^2 \bra{\mbf{\phi}}{\mbf{x}}}{m_{\mbf{\phi}}} \underset{n=1}{\overset{\infty}{\sum}} \underset{i\in E}{\sum} {\int_{n\sigma}^{\infty}\int_{\R_+^d} {\rm e}^ {-\lambda_1 u}\bra{\mbf{1}}{\mbf{z}}^2 \Indi{\bra{\mbf{1}}{\mbf{z}}\leq  {\rm e}^ {\lambda_1 u}}\mu_i( \ud\mbf{z}) \ud u}.
\end{align*}
By Fubini's Theorem applied to the Lebesgue measure in $\R$ and the countable measure in $\N$, we get
\begin{align*}
\underset{n=1}{\overset{\infty}{\sum}}\Expx{\mbf{x}}{\Big(S_{n\sigma,(n+m)\sigma}(\mbf{e}_k)\Big)^2}&\leq
\frac{C^2 \bra{\mbf{\phi}}{\mbf{x}}}{ m_{\mbf{\phi}}}
\underset{i\in E}{\sum} {\int_{\sigma}^\infty
	\underset{n=1}{\overset{\lfloor u/\sigma\rfloor}{\sum}}\int_{\R_+^d} {\rm e}^ {-\lambda_1 u}\bra{\mbf{1}}{\mbf{z}}^2 \Indi{\bra{\mbf{1}}{\mbf{z}}\leq  {\rm e}^ {\lambda_1 u}}\mu_i( \ud\mbf{z}) \ud u}\\
&\leq
 \frac{C^2 \bra{\mbf{\phi}}{\mbf{x}}}{\sigma m_{\mbf{\phi}}}
 \underset{i\in E}{\sum} {\int_0^\infty\int_{\R_+^d}u {\rm e}^ {-\lambda_1 u}\bra{\mbf{1}}{\mbf{z}}^2 \Indi{\bra{\mbf{1}}{\mbf{z}}\leq  {\rm e}^ {\lambda_1 u}}\mu_i( \ud\mbf{z}) \ud u}.
\end{align*}
By Fubini's Theorem, for each $i\in E$,
\begin{align*}
&\int_{0}^{\infty}\int_{\R_+^d}u {\rm e}^ {-\lambda_1 u}\bra{\mbf{1}}{\mbf{z}}^2\Indi{\bra{\mbf{1}}{\mbf{z}}\leq  {\rm e}^ {\lambda_1 u}}\mu_i( \ud\mbf{z})\ud u\\
&=\int_{\bra{\mbf{1}}{\mbf{z}}\leq 1}\int_{0}^{\infty}u {\rm e}^ {-\lambda_1 u}\bra{\mbf{1}}{\mbf{z}}^2\mu_i( \ud\mbf{z})\ud u+\int_{\bra{\mbf{1}}{\mbf{z}}> 1}\int_{\lambda_1^{-1}\ln(\bra{\mbf{1}}{\mbf{z}})}^{\infty}u {\rm e}^ {-\lambda_1 u}\bra{\mbf{1}}{\mbf{z}}^2\ud u\mu_i( \ud\mbf{z})\\
&=\frac{1}{\lambda_1^2}\left(\int_{\bra{\mbf{1}}{\mbf{z}}\leq 1}\bra{\mbf{1}}{\mbf{z}}^2\mu_i( \ud\mbf{z})+\int_{\bra{\mbf{1}}{\mbf{z}}> 1}\bra{\mbf{1}}{\mbf{z}}(\ln(\bra{\mbf{1}}{\mbf{z}})+1)\mu_i( \ud\mbf{z})\right)<\infty.
\end{align*}
Since \eqref{xlogx} holds,
\begin{equation}
\label{borel s}
 \underset{n=1}{\overset{\infty}{\sum}}\Expx{\mbf{x}}{\Big(S_{n\sigma,(n+m)\sigma}(\mbf{e}_k)\Big)^2}<\infty
\end{equation}
and we have the convergence in $\mathbb{L}^2(\p_{\mbf{x}})$. By Chebyshev's inequality and Borel-Cantelli Lemma we have the $\p_{\mbf{x}}$ a.s. convergence.

\bigskip

\noindent\underline{\it (iii) Lattice convergence of $B_{t,t+s}(\mbf{e}_k)$}:
We show that if \eqref{xlogx} holds, then for any $k\in E$, $m\in \N$, $\sigma>0$ and $\mbf{x}\in\R^d_+$,
	\begin{equation*}
	\lim_{n\rightarrow \infty}B_{ n\sigma,(n+m)\sigma}(\mbf{e}_k)=0, \qquad \mbox{ in } \mathbb{L}^1(\p_{\mbf{x}}) \mbox{ and } \p_{\mbf{x}} \mbox{ a.s.}
	\end{equation*}

Note that for any random measure $N$, we have $\widetilde{N}(A)\leq N(A)+\widehat{N}(A)$, where $\widetilde{N}$ is the compensated measure and $\widehat{N}$ the intensity measure. Then, by inequality \eqref{equation Mtz}

\begin{equation}\label{equation b}
\begin{split}
|B_{ n\sigma,(n+m)\sigma}(\mbf{e}_k)|&\\
&\hspace{-2cm}\leq C\underset{i\in E}{\sum}\int_{n\sigma}^{(n+m)\sigma}\int_{\R_+^d}\int_0^{\infty}  {\rm e}^ {-\lambda u}\bra{\mbf{1}}{\mbf{z}}\Indi{r\leq X_{u,i}}\Indi{\bra{\mbf{1}}{\mbf{z}}>  {\rm e}^ {\lambda_1 u}}(\ud N_i+\ud r\mu_i(\ud \mbf{z})\ud u)\\
&\hspace{-2cm}\leq C\underset{i\in E}{\sum}\int_{n\sigma}^{\infty}\int_{\R_+^d}\int_0^{\infty}  {\rm e}^ {-\lambda u}\bra{\mbf{1}}{\mbf{z}}\Indi{r\leq X_{u,i}}\Indi{\bra{\mbf{1}}{\mbf{z}}>  {\rm e}^ {\lambda_1 u}}(\ud N_i+\ud r\mu_i(\ud \mbf{z})\ud u).
\end{split}
\end{equation}
Using the fact that  $W$ is a martingale, we have
\begin{equation*}
\begin{split}
\Expx{\mbf{x}}{|B_{ n\sigma,(n+m)\sigma}(\mbf{e}_k)|}
&\leq \frac{2C}{m_{\mbf{\phi}}}\underset{i\in E}{\sum}\Expx{\mbf{x}}{\int_{n\sigma}^{\infty}\int_{\R_+^d}  {\rm e}^ {-\lambda u}\bra{\mbf{1}}{\mbf{z}}\bra{\mbf{\phi}}{ X_{u}}\Indi{\bra{\mbf{1}}{\mbf{z}}>  {\rm e}^ {\lambda_1 u}}\mu_i(\ud \mbf{z})\ud u}\\
&=\frac{2C \bra{\mbf{\phi}}{\mbf{x}}}{m_{\mbf{\phi}}}\underset{i\in E}{\sum}\int_{n\sigma}^{\infty}\int_{\R_+^d} \bra{\mbf{1}}{\mbf{z}}
\Indi{\bra{\mbf{1}}{\mbf{z}}>  {\rm e}^ {\lambda_1 u}}\mu_i(\ud \mbf{z})\ud u.
\end{split}.
\end{equation*}
By Fubini's Theorem,
$$\Expx{\mbf{x}}{|B_{ n\sigma,(n+m)\sigma}(\mbf{e}_k)|}\leq \frac{2C \bra{\mbf{\phi}}{\mbf{x}}}{m_{\mbf{\phi}}\lambda_1}\underset{i\in E}{\sum}\int_{\bra{\mbf{1}}{\mbf{z}}>  {\rm e}^ {\lambda_1\sigma n}}\bra{\mbf{1}}{\mbf{z}}\ln(\bra{\mbf{1}}{\mbf{z}})\mu_i(\ud \mbf{z}).$$
Recall that $\int_{1\leq \bra{\mbf{1}}{\mbf{z}}<\infty}\bra{\mbf{1}}{\mbf{z}}\ln(\bra{\mbf{1}}{\mbf{z}})\mu_i(\ud \mbf{z})<\infty$. Therefore
\begin{equation*}\label{limit b}
\underset{n\rightarrow\infty}{\lim}\Expx{\mbf{x}}{|B_{ n\sigma,(n+m)\sigma}(\mbf{e}_k)|}=0,
\end{equation*}
which says $\lim_{n\rightarrow \infty}B_{ n\sigma,(n+m)\sigma}(\mbf{e}_k)=0$ in $\mathbb{L}^1(\p_{\mbf{x}})$.
The $\mathbb{P}_{\mbf{x}}$ a.s. convergence follows from the fact that
$$n\mapsto \int_{n\sigma}^{\infty}\int_{\R_+^d}\int_0^{\infty}  {\rm e}^ {-\lambda u}\bra{\mbf{1}}{\mbf{z}}\Indi{r\leq X_{u,i}}\Indi{\bra{\mbf{1}}{\mbf{z}}>  {\rm e}^ {\lambda_1 u}}(\ud N_i+\ud r\mu_i(\ud \mbf{z})\ud u)$$
is decreasing and inequality \eqref{equation b}.

\bigskip
Now applying (i)-(iii) to \eqref{1-3}, the proof is complete.
\bigskip
\end{proof}

\begin{proposition}\label{lattice case}If \eqref{xlogx} holds, then for any  $\sigma>0$,
\begin{equation*}
\underset{n\rightarrow \infty}{\lim}  {\rm e}^ {-\lambda_1 n\sigma} \mbf{X}_{n\sigma}=W_{\infty}\mbf{\widehat{\phi}}, \qquad \mbox{ in } \mathbb{L}^1(\p_{\mbf{x}}) \mbox{ and } \p_{\mbf{x}}\mbox{ a.s.}\end{equation*}
\end{proposition}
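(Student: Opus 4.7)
The plan is to combine Lemma \ref{lemma conditional} with the Perron--Frobenius asymptotic for $\mbf{M}$ in \eqref{limit M barczy}. The key observation is that if we can compute the conditional expectation appearing in Lemma \ref{lemma conditional} explicitly and control it as $m\to\infty$, we will recover the claimed limit.

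First, I would use the Markov property together with the mean formula $\E_{\mbf{x}_0}[\mbf{X}_t]=\mbf{M}(t)^T\mbf{x}_0$ (which follows from \eqref{eB}) to write
$$\E_{\mbf{x}}\!\left[e^{-\lambda_1(n+m)\sigma}\mbf{X}_{(n+m)\sigma}\,\big|\,\mathcal{F}_{n\sigma}\right]=e^{-\lambda_1 n\sigma}\bigl(e^{-\lambda_1 m\sigma}\mbf{M}(m\sigma)^T\bigr)\mbf{X}_{n\sigma}.$$
A short direct calculation using $\mbf{P}=(\phi_i\widehat{\phi}_j)_{i,j}$ shows that $\mbf{P}^T\mbf{X}_{n\sigma}=\langle\mbf{\phi},\mbf{X}_{n\sigma}\rangle\widehat{\mbf{\phi}}$. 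Writing $e^{-\lambda_1 m\sigma}\mbf{M}(m\sigma)^T=\mbf{P}^T+\mbf{R}(m\sigma)$ with $\|\mbf{R}(m\sigma)\|\le c_1 e^{-c_2 m\sigma}$ by \eqref{limit M barczy}, we therefore obtain
$$\E_{\mbf{x}}\!\left[e^{-\lambda_1(n+m)\sigma}\mbf{X}_{(n+m)\sigma}\,\big|\,\mathcal{F}_{n\sigma}\right]=W_{n\sigma}\widehat{\mbf{\phi}}+e^{-\lambda_1 n\sigma}\mbf{R}(m\sigma)\mbf{X}_{n\sigma}.$$

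Under the \eqref{xlogx} condition, Theorem \ref{xlogx theorem} gives that $W_{n\sigma}\to W_\infty$ almost surely and in $\mathbb{L}^1(\p_{\mbf{x}})$, which in particular implies that $\sup_n e^{-\lambda_1 n\sigma}\|\mbf{X}_{n\sigma}\|$ is $\p_{\mbf{x}}$-a.s.\ finite and that $\{e^{-\lambda_1 n\sigma}\|\mbf{X}_{n\sigma}\|\}_n$ is uniformly integrable (via the bound $\|\mbf{X}_{n\sigma}\|\le m_{\mbf{\phi}}^{-1}\langle\mbf{\phi},\mbf{X}_{n\sigma}\rangle$). Combining this with $\|\mbf{R}(m\sigma)\|\le c_1 e^{-c_2 m\sigma}$, the error term $e^{-\lambda_1 n\sigma}\mbf{R}(m\sigma)\mbf{X}_{n\sigma}$ can be made arbitrarily small, uniformly in $n$, by taking $m$ large enough (both almost surely and in $\mathbb{L}^1$).

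The endgame is then a standard $\varepsilon$-$m$-$n$ argument: fix $\varepsilon>0$, choose $m=m(\varepsilon)$ so that $c_1 e^{-c_2 m\sigma}\sup_n e^{-\lambda_1 n\sigma}\|\mbf{X}_{n\sigma}\|<\varepsilon/3$, then invoke Lemma \ref{lemma conditional} to make the difference between $e^{-\lambda_1(n+m)\sigma}\mbf{X}_{(n+m)\sigma}$ and its conditional expectation given $\mathcal{F}_{n\sigma}$ smaller than $\varepsilon/3$ for $n$ large, and finally use $|W_{n\sigma}-W_\infty|\to 0$. This shows $\limsup_{n\to\infty}\|e^{-\lambda_1 n\sigma}\mbf{X}_{n\sigma}-W_\infty\widehat{\mbf{\phi}}\|\le\varepsilon$ (noting that $n+m$ as $n$ varies still sweeps out all sufficiently large integers), and then sending $\varepsilon\downarrow 0$ yields both the $\p_{\mbf{x}}$-a.s.\ and the $\mathbb{L}^1(\p_{\mbf{x}})$ convergence.

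The main obstacle I expect is the uniform control of the error term in the almost sure sense: one must know that $\sup_n e^{-\lambda_1 n\sigma}\|\mbf{X}_{n\sigma}\|$ is almost surely finite before the geometric factor $e^{-c_2 m\sigma}$ can be exploited independently of $n$. This is where the \eqref{xlogx} hypothesis, via the a.s.\ convergence of $W$ supplied by Theorem \ref{xlogx theorem}, is essential; without it, the error would only be controlled in expectation, giving the $\mathbb{L}^1$ statement alone.
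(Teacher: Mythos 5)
Your proposal is correct and takes essentially the same approach as the paper: both exploit the Markov-property formula for the conditional expectation, the Perron--Frobenius estimate \eqref{limit M barczy} to split $e^{-\lambda_1 m\sigma}\mbf{M}(m\sigma)^T$ into $\mbf{P}^T$ plus an error of size $O(e^{-c_2 m\sigma})$, then Lemma \ref{lemma conditional} and the a.s.\ and $\mathbb{L}^1$ convergence of $W$ to close the argument. The paper phrases the Perron--Frobenius control multiplicatively as a sandwich $r_m\widehat{\phi}(k)W_{n\sigma}\le\cdots\le R_m\widehat{\phi}(k)W_{n\sigma}$ rather than your additive $W_{n\sigma}\widehat{\mbf{\phi}}+\mbf{R}(m\sigma)$ decomposition, but these are cosmetically different versions of the same step.
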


\begin{proof}
Let $k\in E$ and $n,m>0$. By the Markov property, we have
	\begin{equation}\label{condicional final}
\Expconx{\mbf{x}}
{ {\rm e}^ {-\lambda_1(n+m)\sigma}\bra{\mbf{e}_k}{\mbf{X}_{(n+m)\sigma}}}{\mathcal{F}_{n\sigma}}= {\rm e}^ {-\lambda_1(n+m)\sigma}\bra{\mbf{M}(m\sigma)\mbf{e}_k}{\mbf{X}_{n\sigma}}.
	\end{equation}
Let
$$r_m=\left(1-\frac{c_1 {\rm e}^ {-c_2m\sigma}}{\max\{\phi(i)\widehat{\phi}(k): i,k\in E\}}\right)\qquad \mbox{ and } \qquad R_m=\left(1+\frac{c_1 {\rm e}^ {-c_2m\sigma}}{\min\{\phi(i)\widehat{\phi}(k): i,k\in E\}}\right).$$
Observe that $r_m\rightarrow 1$ and $R_m\rightarrow 1$ as $m\rightarrow\infty$. Moreover, by limit \eqref{limit M barczy}, for all $i\in E$
$$r_m\phi(i)\widehat{\phi}(k)\leq  {\rm e}^ {-\lambda_1m\sigma}(\mbf{M}(m\sigma)\mbf{e}_k)_i= {\rm e}^ {-\lambda_1m\sigma}M(m\sigma)_{ik}\leq R_m\phi(i)\widehat{\phi}(k).$$
Hence,
$$r_m\widehat{\phi}(k) {\rm e}^ {-\lambda_1n\sigma}\bra{\mbf{\phi}}{\mbf{X}_{n\sigma}}\leq  {\rm e}^ {-\lambda_1(n+m)\sigma}\bra{\mbf{M}(m\sigma)\mbf{e}_k}{\mbf{X}_{n\sigma}}\leq R_m\widehat{\phi}(k) {\rm e}^ {-\lambda_1n\sigma}\bra{\mbf{\phi}}{\mbf{X}_{n\sigma}}.$$
Now, applying Lemma \ref{lemma conditional}, the previous equation and equation \eqref{condicional final}, we get
\begin{equation*}
\label{veremos}\begin{split}
\limsup_{n\rightarrow \infty} {\rm e}^ {-\lambda_1 n\sigma} X_{n\sigma}(k)=&\limsup_{m\rightarrow \infty}\limsup_{n\rightarrow \infty} {\rm e}^ {-\lambda_1 (n+m)\sigma}\bra{\mbf{e}_k}{\mbf{X}_{(n+m)\sigma}}\\
=&\limsup_{m\rightarrow \infty}\limsup_{n\rightarrow \infty}  {\rm e}^ {-\lambda_1(n+m)\sigma}\bra{\mbf{M}(m\sigma)\mbf{e}_k}{\mbf{X}_{n\sigma}}\\
\leq&\lim_{m\rightarrow \infty}\lim_{n\rightarrow \infty}R_m\widehat{\phi}(k)W_{n\sigma}\\
=&\widehat{\phi}(k)W_{\infty},\quad \p_{\mbf{x}}\mbox{ a.s.}
\end{split}
\end{equation*}
In a similar way,
$$\liminf_{n\rightarrow \infty} {\rm e}^ {-\lambda_1 n\sigma} X_{n\sigma}(k)=\widehat{\phi}(k)W_{\infty}, \qquad \p_{\mbf{x}}\mbox{ a.s.}$$
Therefore, the a.s. assertion is true.
Recall that $m_{\mbf{\phi}}:=\min\{\phi(k), k\in E\}>0$.
Observe that $0\leq {\rm e}^ {-\lambda_1 n\sigma} X_{n\sigma}(k)\leq W_{n\sigma}
m_{\mbf{\phi}}^{-1}
$ and by Theorem \ref{xlogx theorem} the martingale $W_t$ converges in  $\mathbb{L}^1(\p_{\mbf{x}})$.  Then by the Generalized Dominated Convergence Theorem, the $\mathbb{L}^1(\p_{\mbf{x}})$ assertion holds. (see for instance \cite[Problem 12, p. 133]{dudley}),
\end{proof}

\subsection{From lattice times  to continuous times.}

In this section, we extend the convergence along lattice times in Theorem \ref{tstrong1} to convergence along continuous times and conclude our main results.

\begin{proof}[Proof of Theorem \ref{tstrong1}]
Since $\mbf{\phi}$ is a positive vector, then for any $\mbf{a}\in\R^d$ we have $|\bra{\mbf{a}}{\mbf{X}_t}|\leq \frac{\|\mbf{a}\|}{m_{\mbf{\phi}}}\bra{\mbf{\phi}}{\mbf{X}_t}$, where $m_{\mbf{\phi}}:=\min\{\phi(i):i\in E\}$.
Therefore $\P_{\mbf{x}}$- a.s.
\begin{equation*}\begin{split}
&\underset{\sigma\rightarrow 0}{\lim}\ \underset{n\rightarrow \infty}{\lim}\ \underset{t\in [n\sigma,(n+1)\sigma]}{\sup}| {\rm e}^ {-\lambda_1 t}\bra{\mbf{M}((n+1)\sigma-t)\mbf{e}_k}{\mbf{X}_t}- {\rm e}^ {-\lambda_1 t}\bra{\mbf{e}_k}{\mbf{X}_t}|\\
\leq &\underset{\sigma\rightarrow 0}{\lim}\ \underset{n\rightarrow \infty}{\lim}\ \underset{t\in [n\sigma,(n+1)\sigma]}{\sup} {\rm e}^ {-\lambda_1 t}\bra{\mbf{\phi}}{\mbf{X}_t}\frac{\|\mbf{M}((n+1)\sigma-t)\mbf{e}_k-\mbf{e}_k\|}{m_{\mbf{\phi}}}\\
\leq&\left(\underset{\sigma\rightarrow 0}{\lim}\ \underset{n\rightarrow \infty}{\lim}\ \underset{t\in [n\sigma,(n+1)\sigma]}{\sup}W_t\right)\left(\underset{\sigma\rightarrow 0}{\lim}\ \underset{u\in [0,\sigma]}{\sup}\frac{\|\mbf{M}(u)\mbf{e}_k-\mbf{e}_k\|}{m_{\mbf{\phi}}}\right)=0.
\end{split}
\end{equation*}
So, in order to have our result, it is enough to prove that
\begin{equation*}
\underset{\sigma\rightarrow 0}{\lim}\ \underset{n\rightarrow \infty}{\lim}\ \underset{t\in [n\sigma,(n+1)\sigma]}{\sup} {\rm e}^ {-\lambda_1 t}\bra{\mbf{M}((n+1)\sigma-t)\mbf{e}_k}{\mbf{X}_t}=\widehat{\phi}(k)W_\infty\qquad \P_{\mbf{x}}\mbox{ a.s.}
\end{equation*}
By applying equation \eqref{completly useful equation} to $\mbf{e}_k$ and
$n\sigma\le t\le (n+1)\sigma$, we obtain
\begin{align}
& {\rm e}^ {-\lambda_1 t}\bra{\mbf{M}((n+1)\sigma-t)\mbf{e}_k}{\mbf{X}_t}\notag\\
=& {\rm e}^ {-\lambda_1 t}\bra{\mbf{M}(\sigma)\mbf{e}_k}{\mbf{X}_{n\sigma}}\notag\\
&+ {\rm e}^ {-\lambda_1 t}\underset{i\in E}{\sum} \int_{n\sigma}^t[\mbf{M}((n+1)\sigma-u)\mbf{e}_k]_i\sqrt{2c_iX_{u,i}}\ud W_{u,i}\notag\\
&+ {\rm e}^ {-\lambda_1 t}\underset{i\in E}{\sum}\int_{n\sigma}^t\int_{\R_+^d}\int_0^{\infty} \bra{\mbf{M}((n+1)\sigma-u)\mbf{e}_k}{\mbf{z}}\Indi{l\leq X_{u-,i}}\widetilde{N}_i^{(1)}(\ud u,\ud \mbf{z}, \ud l)\notag\\
&+ {\rm e}^ {-\lambda_1 t}\underset{i\in E}{\sum}\int_{n\sigma}^t\int_{\R_+^d}\int_0^{\infty} \bra{\mbf{M}((n+1)\sigma-u)\mbf{e}_k}{\mbf{z}}\Indi{l\leq X_{u-,i}}\widetilde{N}_i^{(2)}(\ud u,\ud \mbf{z}, \ud l)\notag\\
=:& {\rm e}^ {-\lambda_1 t}\bra{\mbf{M}(\sigma)\mbf{e}_k}{\mbf{X}_{n\sigma}}+C_{n,t}^{\sigma}(\mbf{e}_k)+S_{n,t}^{\sigma}(\mbf{e}_k)+B_{n,t}^{\sigma}(\mbf{e}_k).
\label{i-iii}
\end{align}
By the result for lattice times (Proposition \ref{lattice case}), $\P_{\mbf{x}}$ a.s.
$$\underset{\sigma\rightarrow 0}{\lim}\ \underset{n\rightarrow \infty}{\lim}\ \underset{t\in [n\sigma,(n+1)\sigma]}{\sup} {\rm e}^ {-\lambda_1 t}\bra{\mbf{M}(\sigma)\mbf{e}_k}{\mbf{X}_{n\sigma}}=\underset{\sigma\rightarrow 0}{\lim}\ \underset{n\rightarrow \infty}{\lim} {\rm e}^ {-\lambda_1\sigma n}\bra{\mbf{M}(\sigma)\mbf{e}_k}{\mbf{X}_{n\sigma}}=\widehat{\phi}(k)W_\infty.$$
Hence, to complete the proof, we only need to prove that the last three terms on the right-hand side of \eqref{i-iii} converge uniformly for $t\in [n\sigma,(n+1)\sigma]$
first as $n\to\infty$ and then $\sigma\to 0$.
\bigskip

\noindent \underline{\it  (i) Convergence of $C_{n,t}^{\sigma}(\mbf{e}_k)$}: We show that for any $k\in E$ and $\mbf{x}\in\R^d_+$,
	\begin{equation*}
	\underset{\sigma\rightarrow 0}{\lim}\ \underset{n\rightarrow \infty}{\lim}\ \underset{t\in [n\sigma,(n+1)\sigma]}{\sup} C_{n,t}^{\sigma}(\mbf{e}_k)=0 \qquad
\p_{\mbf{x}}\mbox{ a.s.}
	\end{equation*}

	Recall the definition of the martingale $C_t^{(n,1,\sigma)}$ given by \eqref{c n,m}. And note that $$|C_{n,t}^{\sigma}(\mbf{e}_k)|\leq  {\rm e}^ {\lambda_1 \sigma}|C_t^{(n,1,\sigma)}|$$
	Then, by the maximal inequality for martingales, for all $\epsilon>0$ we have that for
	\begin{equation*}
	\Prox{\mbf{x}}{\underset{t\in [n\sigma,(n+1)\sigma]}{\sup} |C_{n,t}^{\sigma}(\mbf{e}_k)|>\epsilon}\leq 	\Prox{\mbf{x}}{\underset{t\in [n\sigma,(n+1)\sigma]}{\sup}  {\rm e}^ {\lambda_1 \sigma}|C_t^{(n,1,\sigma)}|>\epsilon}
	\leq \frac{ {\rm e}^ {2\lambda_1 \sigma}}{\epsilon^2}\Expx{\mbf{x}}{\left|C_{(n+1)\sigma}^{(n,1,\sigma)}\right|^2}
	\end{equation*}
	Note that $C_{(n+1)\sigma}^{(n,1,\sigma)}=C_{n\sigma,(n+1)\sigma}(\mbf{e}_k)$, therefore by \eqref{borel c},
\begin{equation*}
\underset{n=1}{\overset{\infty}{\sum}}	\Prox{\mbf{x}}{\underset{t\in [n\sigma,(n+1)\sigma]}{\sup} |C_{n,t}^{\sigma}(\mbf{e}_k)|>\epsilon}<\infty.
\end{equation*}
By Borel-Cantelli we have the result.
\bigskip

\noindent\underline{\it (ii) Convergence of $S_{n,t}^{\sigma}(\mbf{e}_k)$}: We show that, if \eqref{xlogx} holds,	then for any $k\in E$ and $\mbf{x}\in\R^d_+$,
	\begin{equation*}
	\underset{\sigma\rightarrow 0}{\lim}\ \underset{n\rightarrow \infty}{\lim}\ \underset{t\in [n\sigma,(n+1)\sigma]}{\sup} S_{n,t}^{\sigma}(\mbf{e}_k)=0 \qquad
	\p_{\mbf{x}}\mbox{ a.s.}
	\end{equation*}

The proof is analogous to the previous one. But this time we use the martingale $S_t^{(n,1,\sigma)}$ given by \eqref{s n,m} and equation \eqref{borel s}.

\bigskip

\noindent\underline{\it (iii) Convergence of $B_{n,t}^{\sigma}(\mbf{e}_k)$}: We show that, if \eqref{xlogx} holds, then for any $k\in E$ and $\mbf{x}\in\R^d_+$,
	\begin{equation*}
	\underset{\sigma\rightarrow 0}{\lim}\ \underset{n\rightarrow \infty}{\lim}\ \underset{t\in [n\sigma,(n+1)\sigma]}{\sup} B_{n,t}^{\sigma}(\mbf{e}_k)=0 \qquad
	\p_{\mbf{x}}\mbox{ a.s.}
	\end{equation*}

By inequality \eqref{equation Mtz},
\begin{equation*}
\begin{split}
&|B_{n,t}^{\sigma}(\mbf{e}_k)|\\
&\leq C {\rm e}^ {-\lambda_1(t-(n+1)\sigma)}\underset{i\in E}{\sum}\int_{n\sigma}^{t}\int_{\R_+^d}\int_0^{\infty}  {\rm e}^ {-\lambda u}\bra{\mbf{1}}{\mbf{z}}\Indi{r\leq X_{u,i}}\Indi{\bra{\mbf{1}}{\mbf{z}}>  {\rm e}^ {\lambda_1 u}}(\ud N_i+\ud r\mu_i(\ud \mbf{z})\ud u)\\
&\leq C {\rm e}^ {-\lambda_1(t-(n+1)\sigma)}\underset{i\in E}{\sum}\int_{n\sigma}^{\infty}\int_{\R_+^d}\int_0^{\infty}  {\rm e}^ {-\lambda u}\bra{\mbf{1}}{\mbf{z}}\Indi{r\leq X_{u,i}}\Indi{\bra{\mbf{1}}{\mbf{z}}>  {\rm e}^ {\lambda_1 u}}(\ud N_i+\ud r\mu_i(\ud \mbf{z})\ud u).
\end{split}
\end{equation*}
Then,
\begin{equation*}
\begin{split}
\underset{t\in [n\sigma,(n+1)\sigma]}{\sup}|B_{n,t}^{\sigma}(\mbf{e}_k)|&\\
&\hspace{-2cm}\leq C {\rm e}^ {\lambda_1\sigma}\underset{i\in E}{\sum}\int_{n\sigma}^{\infty}\int_{\R_+^d}\int_0^{\infty}  {\rm e}^ {-\lambda u}\bra{\mbf{1}}{\mbf{z}}\Indi{r\leq X_{u,i}}\Indi{\bra{\mbf{1}}{\mbf{z}}>  {\rm e}^ {\lambda_1 u}}(\ud N_i+\ud r\mu_i(\ud \mbf{z})\ud u).
\end{split}
\end{equation*}
The claim is true by following the same steps after equation \eqref{equation b}.
We omit the details here.

\bigskip

 Putting the above three conclusions together, we now conclude
$$\underset{t\rightarrow\infty}{\lim} {\rm e}^ {-\lambda_1 t}\bra{\mbf{e}_k}{\mbf{X}_{t}}=\widehat{\phi}(k)W_\infty,\qquad \P_{\mbf{x}}\ \mbox{a.s.}$$
as required.\end{proof}

\section*{Acknowledgements}
A.K.
and S.P. would like to acknowledge support from a Royal Society Newton International Fellowship held by S.P. and the Advanced Newton Fellowship held collaboratively between A.K. and  J. C. Pardo. 
Y.-X.R. would like to acknowledge support from NSFC (Grant No.  11671017 and 11731009), and LMEQF.

\bibliographystyle{alea3}
\bibliography{bibliography}

\begin{thebibliography}{11}
\providecommand{\natexlab}[1]{#1}
\providecommand{\url}[1]{\texttt{#1}}
\providecommand{\urlprefix}{URL }
\expandafter\ifx\csname urlstyle\endcsname\relax
  \providecommand{\doi}[1]{doi:\discretionary{}{}{}#1}\else
  \providecommand{\doi}{doi:\discretionary{}{}{}\begingroup
  \urlstyle{rm}\Url}\fi
\providecommand{\eprint}[2][]{\url{#2}}

\bibitem[{Athreya and Ney(2004)}]{an}
K.~B. Athreya and P.~E. Ney.
\newblock \emph{Branching processes}.
\newblock Dover Publications, Inc., Mineola, NY (2004).

\bibitem[{Barczy et~al.(2015)Barczy, Li and Pap}]{blp1}
M.~Barczy, Z.~Li and G.~Pap.
\newblock Stochastic differential equation with jumps for multi-type continuous
  state and continuous time branching processes with immigration.
\newblock \emph{ALEA Lat. Am. J. Probab. Math. Stat.} \textbf{12}~(1), 129--169
  (2015).

\bibitem[{Barczy and Pap(2016)}]{bp2005}
M.~Barczy and G.~Pap.
\newblock Asymptotic behavior of critical, irreducible multi-type continuous
  state and continuous time branching processes with immigration.
\newblock \emph{Stoch. Dyn.} \textbf{16}~(4) (2016).

\bibitem[{Chen et~al.(2017)Chen, Ren and Song}]{CRS}
Z.-Q. Chen, Y.-X. Ren and R.~Song.
\newblock ${L}\log {L}$ criterion for a class of multitype superdiffusions with
  nonlocal branching mechanism.
\newblock \emph{arXiv preprint 1708.08219}  (2017).

\bibitem[{Dudley(2002)}]{dudley}
R.~M. Dudley.
\newblock \emph{Real analysis and probability}, volume~74 of \emph{Cambridge
  Studies in Advanced Mathematics}.
\newblock Cambridge University Press, Cambridge (2002).

\bibitem[{Dynkin(1991)}]{dyn91}
E.~B. Dynkin.
\newblock Branching particle systems and superprocesses.
\newblock \emph{Ann. Probab.} \textbf{19}~(3), 1157--1194 (1991).

\bibitem[{Dynkin(2002)}]{dynkin}
E.~B. Dynkin.
\newblock \emph{Diffusions, superdiffusions and partial differential
  equations}.
\newblock American Mathematical Society, Providence, RI (2002).

\bibitem[{Dynkin and Kuznetsov(2004)}]{DK04}
E.~B. Dynkin and S.~E. Kuznetsov.
\newblock {$\N$}-measures for branching exit {M}arkov systems and their
  applications to differential equations.
\newblock \emph{Probab. Theory Related Fields} \textbf{130}~(1), 135--150
  (2004).

\bibitem[{Kyprianou and Palau(2017)}]{kp}
A.~Kyprianou and S.~Palau.
\newblock Extinction properties of multi-type continuous-state branching
  processes.
\newblock \emph{Stochastic Processes and their Applications}  (2017).

\bibitem[{Li(2011)}]{Li}
Zenghu Li.
\newblock \emph{Measure-valued branching {M}arkov processes}.
\newblock Probability and its Applications (New York). Springer, Heidelberg
  (2011).

\bibitem[{{Ren} et~al.(2016){Ren}, {Song} and {Yang}}]{rensongyang}
Y.-X. {Ren}, R.~{Song} and T.~{Yang}.
\newblock {Spine decomposition and $L\log L$ criterion for superprocesses with
  non-local branching mechanisms}.
\newblock \emph{ArXiv preprint 1609.02257}  (2016).

\end{thebibliography}

\end{document}